\numberwithin{equation}{section}
\theoremstyle{plain}
\newtheorem{theorem}{Theorem}[section]
\newtheorem{proposition}[theorem]{Proposition}
\theoremstyle{definition}
\newtheorem{example}{Example}[section]
\newtheorem{assumption}{Assumption}[section]
\theoremstyle{remark}
\newtheorem*{remark}{Remark}
\newcommand{\norm}[1]{\left\|#1\right\|}
\newcommand{\abs}[1]{\left\vert#1\right\vert}
\newcommand{\spr}[1]{\left\langle\,#1\,\right\rangle}
\newcommand{\sspr}[1]{\left\langle \,#1\,\right\rangle_{s,\beta}}
\newcommand{\kl}[1]{\left(#1\right)}
\newcommand{\Kl}[1]{\left\{#1\right\}}
\newcommand{\vl}{\, \vert \,}
\newcommand{\intV}[1]{\int\limits_\Omega #1 \, dx}
\newcommand{\intS}[1]{\int\limits_{\partial\Omega} #1 \, dS}
\newcommand{\R}{\mathbb{R}} 
\newcommand{\N}{\mathbb{N}}
\newcommand{\grad}{\nabla}
\renewcommand{\div}[1]{\operatorname{div}\left(#1\right)}
\newcommand{\Ht}{{H^2}}
\newcommand{\HsO}{{H^s(\Omega)}}
\newcommand{\HoO}{{H^1(\Omega)}}
\newcommand{\HtO}{{H^2(\Omega)}}
\newcommand{\Lt}{{L^2}}
\newcommand{\LoO}{{L^1(\Omega)}}
\newcommand{\LtO}{{L^2(\Omega)}}
\newcommand{\LiO}{{L^\infty(\Omega)}}
\newcommand{\xD}{x^\dagger}
\newcommand{\xkd}{x_k^\delta}
\newcommand{\xkpd}{x_{k+1}^\delta}
\newcommand{\xksd}{x_{k_*}^\delta}
\newcommand{\skd}{s_k^\delta}
\newcommand{\okd}{{\omega_k^\delta}}
\newcommand{\yd}{y^{\delta}}
\newcommand{\cLM}{c_{LM}}
\newcommand{\ks}{{k_*}}
\newcommand{\bO}{{\partial \Omega}}
\newcommand{\eps}{\varepsilon}
\newcommand{\HdoO}{{H_\diamond^1(\Omega)}}
\newcommand{\n}{\vec{n}}
\newcommand{\Es}{E_{s}}
\newcommand{\Esb}{E_{s,\beta}}
\newcommand{\sib}{\underline{\sigma}}
\newcommand{\Ed}{E^\delta}
\newcommand{\sD}{{\sigma^\dagger}}
\newcommand{\Msb}{\mathcal{M}(\sib)}
\newcommand{\Ds}{\mathcal{D}_s}
\newcommand{\Htb}{H^2_\beta}
\title{Limited Angle Acousto-Electrical Tomography}
\author{
Simon Hubmer\footnote{Johannes Kepler University Linz, Doctoral Program Computational Mathematics, Altenbergerstra{\ss}e 69, A-4040 Linz, Austria (simon.hubmer@dk-compmath.jku.at)},
Kim Knudsen\footnote{Technical University of Denmark, Department of Applied Mathematics and Computer Science, Asmussens All\'e, 2800 Kongens Lyngby, Denmark (kiknu@dtu.dk)},
Changyou Li\footnote{Technical University of Denmark, Department of Applied Mathematics and Computer Science, Asmussens All\'e, 2800 Kongens Lyngby, Denmark (chgy@dtu.dk) and School of Electronics and Information, Northwestern Polytechnical University, Xian, China},
Ekaterina Sherina\footnote{Technical University of Denmark, Department of Applied Mathematics and Computer Science, Asmussens All\'e, 2800 Kongens Lyngby, Denmark (sershe@dtu.dk), corresponding author}
}
\begin{document}

\maketitle
\begin{abstract}
This paper considers the reconstruction problem in Acousto-Electrical Tomography, i.e., the problem of estimating a spatially varying conductivity in a bounded domain from measurements of the internal power densities resulting from different prescribed boundary  conditions. Particular emphasis is placed on the limited angle scenario, in which the boundary conditions are supported only on a part of the boundary.  The reconstruction problem is formulated as an optimization problem in a Hilbert space setting and solved using Landweber iteration. The resulting algorithm is implemented numerically in two spatial dimensions and tested on simulated data. The results quantify the intuition that features close to the measurement boundary are stably reconstructed and features further away are less well reconstructed. Finally, the ill-posedness of the limited angle problem is quantified numerically using the singular value decomposition of the corresponding linearized problem.

\medskip
\noindent \textbf{Keywords:} Electrical Impedance Tomography, Acousto-Electrical Tomography, Limited Angle, Hybrid Data, Inverse Problem, Parameter Identification, Landweber Iteration, Regularization Method

\medskip
\noindent \textbf{AMS:} 65J22, 35R30, 65M32 
\end{abstract}

\section{Introduction}

Electrical Impedance Tomography is an emerging technology that  aims at reconstructing the spatially varying electric conductivity distribution in a body from electrostatic measurements of voltages and the corresponding current fluxes on the surface of the body. The quantitative and structural information acquired about the conductivity of the body  can potentially be valuable for medical and industrial applications. For example, EIT shows great promise for bed side lung monitoring \cite{Holder_2005} and for non-destructive testing of concrete \cite{Karhunen_2010,Karhunen_2010a}.

The reconstruction problem in EIT is well-known for being (severely) ill-posed \cite{Mandache_2001}. To overcome the ill-posedness, a novel idea of coupling EIT with a different physical phenomenon has been promoted in the last decade. EIT used together with magnetic resonance leads to so-called Magnetic Resonance EIT \cite{Seo_2011a}, whereas EIT modulated by ultrasound waves leads to Acousto-Electrical Tomography \cite{ZhangWang_2004,Ammari_Bonnetier_Capdeboscq_Tanter_Fink_2008, Kuchment_Kunyansky_2010} (or equivalently Impedance-Acoustic Tomography (IAT) \cite{Gebauer_Scherzer_2008}). Both modalities give rise to additional interior information and may potentially lead to a significant improvement of the conductivity reconstructions having both high contrast and resolution. 

In this paper we focus on Acousto-Electrical Tomography (AET). Denote by $\sigma$ the spatially varying conductivity in the bounded and smooth domain $\Omega \subset \R^N$, $N=2,3$. The power density is defined as
	\begin{equation} \label{eq_powden}
		E(\sigma) := \sigma \abs{\grad u(\sigma)}^2 \,,
	\end{equation}
where $u(\sigma)$ denotes the interior voltage potential given as the solution of the elliptic equation
	\begin{equation}\label{eq_1}
        \div{\sigma \grad u} = 0 \,, \qquad \text{in } \Omega \,.
	\end{equation}
The goal is to reconstruct $\sigma$ from knowledge of $E$, where the data $E$ can be obtained through the AET procedure \cite{Bal_Bonnetier_Monard_Triki_2013} and, moreover, $E$ is connected to the conductivity $\sigma$ via \eqref{eq_powden}.
Most studies \cite{Bal_Naetar_Scherzer_Scotland_2013,Capdeboscq_Fehrenbach_Gournay_Kavian_2009,Ammari_Bonnetier_Capdeboscq_Tanter_Fink_2008} consider the case of \eqref{eq_1} being supplemented with Dirichlet conditions on the boundary $\partial \Omega$ 
	\begin{equation} \label{eq_dir}
	  	u\vert_\bO = f \,.
	\end{equation}
In contrast, this paper considers \eqref{eq_1} supplemented with Neumann boundary conditions
	\begin{equation} \label{eq_neu}
		(\sigma \grad u) \cdot \n \vert_\bO = g \,.
	\end{equation}

Note that physically the function $g$ measures the  current flux on the boundary in the normal direction given by the outward unit normal $\n$ to $\partial \Omega$. Neumann boundary conditions, which model the current flux along the boundary, are the natural boundary conditions for EIT, and they also form the basis of more sophisticated models for EIT like the complete electrode model \cite{Somersalo_Cheney_Isaacson_1992}. Since EIT forms the basis of AET, Neumann boundary conditions are also natural for AET \cite{Ammari_Bonnetier_Capdeboscq_Tanter_Fink_2008}.

The AET procedure makes use of perturbations in the conductivity caused by an ultrasound wave sent through the body \cite{Bal_Bonnetier_Monard_Triki_2013}. The wave (given by $p(x,t)$) perturbs the conductivity slightly into \cite{Jossinet_Lavandier_Cathignol_1998,Lavandier_Jossinet_Cathignol_2000}
  \begin{align*}
    \sigma_\varepsilon = \sigma (1 + \varepsilon p) \,,
  \end{align*}
where $\varepsilon$ is the acousto-electrical coupling constant. The difference in the electric boundary measurements between the perturbed and unperturbed situation is quantified by the power difference  
	\begin{equation*}\label{eq_pwd_obtaining}
		\spr{f_\varepsilon-f,g}=-\varepsilon\intV{p(x,t)\sigma\nabla u \cdot \nabla u_\varepsilon} \,,
	\end{equation*}
that can be computed from the measured boundary data $g,f,f_\varepsilon$. Here $u_\varepsilon$ is a solution of \eqref{eq_1} and \eqref{eq_neu} with $\sigma$ replaced by $\sigma_\varepsilon,$ and $f_\varepsilon = u_\varepsilon|_{\partial \Omega}$. Assuming that $\varepsilon$ is small allows the approximation $\sigma\nabla u \cdot \nabla u_\varepsilon \approx \sigma \abs{\grad u}^2$, and thus, by solving the equation
	\begin{equation*}
  		\spr{f_\varepsilon-f,g}=-\varepsilon\intV{p(x,t)\sigma|\nabla u|^2} \,,
	\end{equation*}
the interior power density \eqref{eq_powden} can be computed. Depending on the waves $p(x,t)$ the actual computation of $E(\sigma)$ might be an ill-posed problem. A similar derivation can be done for \eqref{eq_1} supplemented with \eqref{eq_dir}.

It is well known that a single measurement of the power density $\sigma \abs{\grad u(\sigma)}^2$ is in general not enough to uniquely determine the conductivity $\sigma$ \cite{Bal_2013,Isakov_2006}. However, it was shown in \cite{Capdeboscq_Fehrenbach_Gournay_Kavian_2009} for the two dimensional case that if measurements
	\begin{equation}\label{measurement_cond}
		\kl{\sigma \abs{\grad u_1(\sigma)}^2\,,
		\sigma \abs{\grad u_2(\sigma)}^2\,,
		\sigma \, \grad u_1(\sigma) \cdot \grad u_2(\sigma) } \,,
	\end{equation} 
with
	\begin{equation}\label{determinant_cond}
		\det\kl{\grad u_1(\sigma),\grad u_2(\sigma)} \geq c > 0 \,,
	\end{equation}
are available, where $u_1$, $u_2$ are two solutions of \eqref{eq_1}, then $\sigma$ can be uniquely determined from those measurements. (Note that the third quantity in \eqref{measurement_cond} can be obtained from a third power density measurement by the polarization identity.) Similar results were also obtained for $3$ dimensions in \cite{Bal_Bonnetier_Monard_Triki_2013} and for arbitrary dimensions in \cite{Monard_Bal_2012}, see also \cite{Alberti_Capdeboscq_2018}. Hence, the reconstruction of $\sigma$ profits from multiple power density measurements. See also \cite{Alberti_Bal_Cristo_2017,Bal_Hoffmann_Knudsen_2017,Capdeboscq_2015} for more information about the choice of boundary conditions.

Under the assumptions \eqref{measurement_cond}, \eqref{determinant_cond}, the inverse problem is well-posed and one can expect to reconstruct the conductivity stably with high contrast and resolution; see  \cite{Bal_Naetar_Scherzer_Scotland_2013,Capdeboscq_Fehrenbach_Gournay_Kavian_2009,Ammari_Bonnetier_Capdeboscq_Tanter_Fink_2008,Hoffmann_Knudsen_2014} for some numerical implementations of the problem.

To model the scenario when only a part of the boundary is accessible to the electrostatic measurements we introduce the proper subset $\Gamma_1 \subset \partial \Omega$ and assume that the induced current field has $\text{supp}(g) \subset \Gamma_1.$ This assumption tacitly  enforces  a no flux condition on the inaccessible boundary $\Gamma_0 = \partial \Omega \setminus \Gamma_1.$ The main purpose of this paper is to study the influence of the size of $\Gamma_1$ on the quality of the reconstructions. This is related to \cite{Ammari_Garnier_Jing_2012}, in which the authors derive an analytic formula for reconstructing the conductivity in a specific limited-angle setting and give a simple numerical example. However, the derived formula depends on the exact limited-angle setting and, as the authors themselves mention, does not work for general conductivity distributions.

For EIT the problem of limited angle data (in that context known as partial data) is fairly well understood \cite{Bukhgeim_Uhlmann_2002,Kenig_Sjostrand_Uhlmann_2007,Knudsen_2006,Imanuvilov_Uhlmann_Yamamoto_2010}; and the instability is known to be severe \cite{Caro_2016}. We expect that a similar instability appears here and we want to see how the ill-posedness of the problem is affected by accessibility of the measurement boundary.   

In this paper we take a computational approach to the problem by formulating the inverse problem as a nonlinear operator equation
	\begin{equation}\label{Fx=y}
		F(\sigma) = E \,.
	\end{equation}
We provide the Fr\'echet derivative and its adjoint of the operator $F$ and approximate the solution using Landweber iteration. Numerical examples are presented focusing especially on the limited angle problem. Furthermore, a numerical ill-posedness quantification is performed, quantifying the expected reconstruction quality in various areas of the domain $\Omega$ in this case by considering the singular value decomposition of the linearized problem.

The paper is organized as follows: in Section~\ref{sec_MathPre} we recall the basic notation and important results from PDE theory for the problem \eqref{eq_1}, \eqref{eq_neu}. In Section~\ref{sect_inv_prob} we discuss the inverse problem \eqref{Fx=y}, showing that the operator $F$ is Frechet differentiable. Furthermore, we derive the Frechet derivative and the adjoint thereof. The results are generalized to multiple measurements of the power density. The regularization approach, which we apply for approximating the solution of the inverse problem \eqref{Fx=y}, is briefly outlined in Section~\ref{sec_Regular}. The idea on ill-posedness quantification of the problem is given in Section~\ref{sect_illp_quant}. In Sections~\ref{sec_NumDetails} and \ref{sect_num_res} we describe the setting of our numerical example problem and present various reconstruction results for different boundary settings, especially focusing on the limited angle case. Moreover, we present results of the ill-posedness quantification.

\section{Mathematical Preliminaries}\label{sec_MathPre}

In this section we recall the basic notations and results for the Neumann problem \eqref{eq_1}, \eqref{eq_neu}. In addition we consider the Fr\'echet differentiablity of the solution $u$ with respect to $\sigma.$  We start by stating the main assumptions taken throughout:
\begin{assumption}\label{ass_main}
Let $\Omega$ denote a non-empty, bounded, open and connected set in $\R^N$, $N=2,3$, with boundary $\bO \in C^{1,1}$. Furthermore, assume that $g \in L^2(\partial \Omega)$ is given such that
	\begin{equation}\label{cond_compatibility}
		\intS{g} = 0 \,.
	\end{equation}
Finally, we assume that a priori a lower bound $\sib > 0$ is given such that 
	\begin{equation}\label{Msb}
	  \sigma \in \Msb := \left\{ \sigma \in \LiO \vl \sigma \geq \sib  > 0   \right\} \,.
	\end{equation}
\end{assumption}

It is well-known from standard theory for elliptic PDEs \cite{Gilbarg_Trudinger_1998} that under Assumption \ref{ass_main} the Neumann problem \eqref{eq_1}, \eqref{eq_neu} has a unique weak solution 
	\begin{equation*}
	  u(\sigma) \in \HdoO : = \left\{ u \in H^1(\Omega) \, \Bigg\vert \, \intV{u}  = 0 \right\} \,.
	\end{equation*}
We occasionally drop $\sigma$ in the notation and write $u = u(\sigma).$ Moreover, there is a constant $C>0$ such that 
	 \begin{equation*}
	   \norm{u}_\HoO \leq C \norm{g}_{L^2(\partial\Omega)}\,.
	 \end{equation*}
If in addition  $\sigma \in C^{0,1}(\Omega)$ and $g\in H^{1/2}(\partial\Omega)$ then  $u \in \HtO$ with
	\begin{equation*}
	  \norm{u}_\HtO \leq C \norm{g}_{H^{1/2}(\partial\Omega)} \,.
	\end{equation*}

We now consider the solution mapping $u\colon \sigma \mapsto u(\sigma)$ as a mapping $\Msb \to \LtO$. From the weak formulation of the PDE problem the continuity estimate
	 \begin{equation*}
	   \norm{u(\sigma)- u(\sigma_0)}_\HoO \leq \cLM \norm{\sigma-\sigma_0}_\LiO \norm{u(\sigma_0)}_\HoO \,, \quad \forall \, \sigma,\sigma_0 \in \Msb \,, 
	 \end{equation*}
follows. In addition, $u$ is Fr\'echet differentiable with derivative $u'(\sigma)h$ at $\sigma \in \Msb$ in direction $h$, given as the unique weak solution to the Neumann problem
	 \begin{equation}\label{udiff}
	   \begin{split}
	     \div{\sigma \grad (u'(\sigma)h)} &= - \div{h \grad u(\sigma)} \,, \qquad \text{in } \Omega \,,
	     \\
	     (\sigma \grad (u'(\sigma)h)) \cdot \n \vert_\bO &= 0 \,.
	   \end{split}
	 \end{equation}

\section{Fr\'echet Differentiability of the Forward Operator}\label{sect_inv_prob} 

In this section we consider the forward operator $F \colon \sigma \mapsto E(\sigma).$  We first analyse the mapping properties in the situation of a single boundary condition and show that $F$ is Fr\'echet differentiable. Then we generalize the results to more boundary conditions.

\subsection{The Single Measurement Case}\label{sect_single}

For $\sigma \in \Msb$, the power density is naturally considered as an element in  $L^1(\Omega)$, i.e.,
    \begin{equation}\label{def_F}
	\begin{split}
      F: \Msb & \to \LoO \,,
      \\
      \sigma & \mapsto E(\sigma)\,,
	\end{split}
  \end{equation}
where $\Msb$ and $E$ are defined by \eqref{Msb} and \eqref{eq_powden} respectively, but since $\LoO$ is not reflexive, solving \eqref{Fx=y} in $\LoO$ is not straightforward. By increasing the regularity of $\sigma$ we pose the problem in a better suited Hilbert space. We introduce the set
	\begin{equation}\label{def_Ds}
	  \Ds(F) :=  \HsO \cap \Msb\,,
	\end{equation}
and note that for $ s > N/2 + 1$ by Sobolev embedding $\Ds(F) \subset C^{0,1}(\overline\Omega)\cap \Msb, $ and hence $u(\sigma) \in  H^2(\Omega)$ leaving $E(\sigma) \in L^2(\Omega)$ by the H\"older inequality. Thus we can consider
  \begin{equation}\label{def_Fs}
	\begin{split}
      F: \Ds(F) \to \LtO \,,
	\end{split}
  \end{equation}	
and the equation \eqref{Fx=y} can be considered in the standard framework of nonlinear ill-posed problems in Hilbert spaces \cite{Engl_Hanke_Neubauer_1996}.

We eventually address \eqref{Fx=y} using an iterative approach and hence the Fr\'echet derivative is required. In the following proposition we obtain  the derivative. The proof is analogous to the case of Dirichlet boundary conditions \cite{Bal_Naetar_Scherzer_Scotland_2013}.
\begin{proposition}\label{thm_DF}
The operator $F:\Ds(F) \to \LtO$ defined by \eqref{def_Fs} is Fr\'echet differentiable for $s>N/2+1$ with
	\begin{alignat}{2}\label{def_DF}
		F'(\sigma)h &= h \abs{\grad u(\sigma)}^2 && + 2 \, \sigma \grad u(\sigma) \cdot \grad(u'(\sigma)h)\,,
	\end{alignat}
where $u'(\sigma)h$ is defined by \eqref{udiff}.
\end{proposition}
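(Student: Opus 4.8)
The plan is to verify Fréchet differentiability directly from the definition by computing the difference $F(\sigma+h)-F(\sigma)$ and isolating the linear part. Writing $u=u(\sigma)$ and $u_h=u(\sigma+h)$ for brevity, I would expand
\begin{equation*}
  F(\sigma+h)-F(\sigma) = (\sigma+h)\abs{\grad u_h}^2 - \sigma\abs{\grad u}^2 \,,
\end{equation*}
and add and subtract suitable terms so as to match the proposed derivative \eqref{def_DF}. The natural splitting is to write $\abs{\grad u_h}^2 - \abs{\grad u}^2 = \grad(u_h-u)\cdot\grad(u_h+u)$, which exposes the increment $u_h-u$ of the solution map. The candidate derivative suggests I should further decompose $u_h-u = u'(\sigma)h + r$, where $r$ denotes the higher-order remainder in the Fréchet expansion of the solution operator $u$, whose differentiability and the defining equation \eqref{udiff} for $u'(\sigma)h$ are already available from Section~\ref{sec_MathPre}.

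The key steps, in order, would be: first, algebraically rearrange $F(\sigma+h)-F(\sigma)-F'(\sigma)h$ into a sum of terms, each of which is either quadratic in $h$, quadratic in the increment $u_h-u$, or involves the solution-map remainder $r$; second, estimate each term in the $\LtO$ norm. For the leading bookkeeping I expect terms like $h\,\grad(u_h-u)\cdot\grad(u_h+u)$ and $\sigma\,\grad u\cdot\grad(u_h-u-u'(\sigma)h)$ to appear, together with genuinely quadratic contributions. Third, I would invoke the continuity estimate and the Fréchet differentiability of $u\colon\sigma\mapsto u(\sigma)$ to control $\norm{u_h-u}$ and the remainder $r=u_h-u-u'(\sigma)h$, the latter being $o(\norm{h})$ by definition of the derivative of the solution map. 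The Sobolev embedding for $s>N/2+1$ guarantees $\Ds(F)\subset C^{0,1}(\overline\Omega)\cap\Msb$, so that $u,u_h\in\HtO$ and all the pointwise products of gradients lie in $\LtO$ via Hölder's inequality; this is precisely what makes the $\LtO$ target space legitimate.

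The main obstacle I anticipate is the norm bookkeeping in $\LtO$ rather than any conceptual difficulty. To bound a product such as $h\,\grad(u_h-u)\cdot\grad(u_h+u)$ in $\LtO$, I cannot simply use the $\HoO$ continuity estimate, since the product of two $L^2$-gradients need not be in $L^2$; I will instead need the $H^2$ regularity so that one factor can be placed in $\Li$ (via Sobolev embedding of $\HtO\hookrightarrow C(\overline\Omega)$ or $W^{1,\infty}$-type control) while the other stays in $\Lt$. Making these Hölder splittings consistently, and tracking that each remaining factor is $\LandauO(\norm{h}_\HsO)$ while at least one factor is $o(\norm{h}_\HsO)$ or carries an extra power of $h$, is the technically delicate part. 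Once every term is shown to be $o(\norm{h}_\HsO)$ in $\LtO$, Fréchet differentiability with the stated derivative \eqref{def_DF} follows, and the argument is indeed parallel to the Dirichlet case of \cite{Bal_Naetar_Scherzer_Scotland_2013}.
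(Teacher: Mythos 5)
Your overall architecture --- expand $F(\sigma+h)-F(\sigma)$, split $u(\sigma+h)-u(\sigma)=u'(\sigma)h+r$, and estimate the quadratic and remainder terms in $\LtO$ --- is exactly the argument that the paper compresses into a one-line appeal to the product and chain rule together with the Dirichlet-case reference \cite{Bal_Naetar_Scherzer_Scotland_2013}; so there is no disagreement about the route, and your plan is the honest version of what the citation hides.

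The genuine gap is in the step you yourself flag as delicate: the Hölder splitting. Writing $u_h:=u(\sigma+h)$, you propose to control products like $h\,\grad(u_h-u)\cdot\grad(u_h+u)$ by putting one gradient factor in $\LiO$, justified ``via Sobolev embedding of $\HtO\hookrightarrow C(\overline\Omega)$ or $W^{1,\infty}$-type control''. This fails in the dimensions at hand: $\HtO\hookrightarrow C(\overline\Omega)$ (valid for $N\le 3$) bounds $u$ itself, not $\grad u$, and $\HtO$ does \emph{not} embed into $W^{1,\infty}(\Omega)$ for $N=2,3$. Under the paper's hypotheses the solutions are only in $\HtO$, so their gradients lie in $H^1(\Omega)\hookrightarrow L^4(\Omega)$ (indeed $L^6$ for $N=3$), not in $\LiO$; the splitting that works is $L^4\times L^4$, since a product of gradients of two $\HtO$ functions is in $\LtO$ by Cauchy--Schwarz. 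Relatedly, the facts you import from Section~\ref{sec_MathPre} --- $\HoO$-continuity and $\HoO$-Fr\'echet differentiability of $\sigma\mapsto u(\sigma)$ --- are too weak to close the estimates even after this repair: for instance the term $\sigma\,\grad u\cdot\grad r$, with $r$ only $o(\norm{h}_\HsO)$ in $\HoO$, is a product of an $L^2$ with an $L^4$ function and so lands in $L^{4/3}$, not $\LtO$. What you actually need (true under the standing assumptions, but it must be stated and proved) is $H^2$-level control of the increment and remainder: $w:=u_h-u$ solves $\div{(\sigma+h)\grad w}=-\div{h\grad u}$ and $r=w-u'(\sigma)h$ solves $\div{\sigma\grad r}=-\div{h\grad w}$, both with homogeneous Neumann data; since $h$ is Lipschitz (by $s>N/2+1$) and $u,w\in\HtO$, the right-hand sides are in $\LtO$, and elliptic regularity yields $\norm{w}_\HtO=\LandauO(\norm{h}_\HsO)$ and $\norm{r}_\HtO=\LandauO(\norm{h}_\HsO^2)$. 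With these two estimates and the $L^4$--$L^4$ splitting, every term in your decomposition is $\LandauO(\norm{h}_\HsO^2)$ in $\LtO$ and the proof closes; as written, however, the key estimates would not go through.
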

\begin{proof}
This follows immediately from the definition of the operator, \eqref{udiff}  and the product and the chain rule applied to the function $x \abs{\grad f(x)}^2$, in the same way as in \cite{Bal_Naetar_Scherzer_Scotland_2013}.
\end{proof}

In order to calculate the adjoint of the Fr\'echet derivative of $F$, we need the following proposition regarding the adjoint of  embedding operators in Sobolev spaces.
\begin{proposition}
Denote by $\Es : \HsO \to \LtO$ the embedding operator for $s\ge0$, i.e., $\Es v = v$ for all $v \in \HsO$. Then for any element $w \in \LtO$ the adjoint $\Es^* w$ is given as the unique solution of the variational problem 
	\begin{equation}\label{def_Es}
		\spr{\Es^*w,v}_\HsO = \spr{w,v}_\LtO \,, \qquad \forall \, v \in \HsO \,. 
	\end{equation} 
\end{proposition}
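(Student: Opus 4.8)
The plan is to recognize the claimed variational problem as nothing more than the defining relation of the Hilbert-space adjoint, and to obtain existence and uniqueness from the Riesz representation theorem. The essential prerequisite, which I would state first, is that for $s \ge 0$ the embedding $\Es$ is a \emph{bounded} linear operator from $\HsO$ into $\LtO$; that is, there is a constant $C>0$ with $\norm{\Es v}_\LtO \le C \norm{v}_\HsO$ for all $v \in \HsO$. This is the standard continuity of the Sobolev embedding and guarantees that $\Es$ admits a well-defined bounded adjoint $\Es^* \colon \LtO \to \HsO$.

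Next I would fix $w \in \LtO$ and consider the linear functional $\ell_w \colon \HsO \to \R$ defined by $\ell_w(v) := \spr{w,v}_\LtO$. Combining the Cauchy--Schwarz inequality in $\LtO$ with the embedding bound yields
\begin{equation*}
	\abs{\ell_w(v)} \le \norm{w}_\LtO \norm{v}_\LtO \le C \norm{w}_\LtO \norm{v}_\HsO \,,
\end{equation*}
so $\ell_w$ is a bounded linear functional on $\HsO$. By the Riesz representation theorem there is a unique $z_w \in \HsO$ with $\spr{z_w,v}_\HsO = \ell_w(v) = \spr{w,v}_\LtO$ for every $v \in \HsO$. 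This already settles the existence and uniqueness of a solution to \eqref{def_Es}.

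It then remains only to identify $z_w$ with $\Es^* w$. Using the definition of the adjoint, the symmetry of the (real) inner products, and the fact that $\Es$ acts as the identity on $\HsO$,
\begin{equation*}
	\spr{\Es^* w, v}_\HsO = \spr{\Es v, w}_\LtO = \spr{w, v}_\LtO \,, \qquad \forall\, v \in \HsO \,.
\end{equation*}
Hence $\Es^* w$ satisfies the very same variational identity as $z_w$, and the uniqueness provided by the Riesz theorem forces $\Es^* w = z_w$, which is exactly \eqref{def_Es}.

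Since every step is a direct invocation of standard Hilbert-space theory, I do not anticipate a genuine obstacle here; the only points requiring care are verifying the continuity of the embedding (so that $\ell_w$ is bounded and the adjoint is well defined) and correctly tracking the symmetry of the real inner products when rewriting $\spr{\Es v, w}_\LtO$ as $\spr{\Es^* w, v}_\HsO$.
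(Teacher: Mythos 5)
Your proposal is correct and follows essentially the same route as the paper: the paper's one-line proof invokes the definition of $\Es$ together with the Lax--Milgram lemma, which for the bounded, coercive, symmetric bilinear form $\spr{\cdot\,,\cdot}_\HsO$ amounts exactly to the Riesz representation argument you spell out. You simply supply the details the paper leaves implicit, namely the boundedness of the embedding, the boundedness of the functional $v \mapsto \spr{w,v}_\LtO$, and the identification of the Riesz representer with $\Es^*w$.
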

\begin{proof}
This follows from the definition of $\Es$ and the Lax-Milgram Lemma.
\end{proof}
 
We are now prepared to give the adjoint of the Fr\'echet derivative of $F$:
\begin{theorem}\label{thm_adjoint_Fs}
Let $F:\Ds(F) \to \LtO$ be defined by \eqref{def_Fs} with $s>N/2 + 1$. Then for the adjoint of the Fr\'echet derivative of $F$ there holds
	\begin{equation}\label{eq_adjoint}
		F'(\sigma)^*w = \Es^*\left(w\abs{\grad u(\sigma)}^2 + 2\nabla u(\sigma)  \cdot \nabla (Aw)\right)\,,
	\end{equation}
where $A w \in V$ is given as the unique solution of the variational problem
  	\begin{equation} \label{def_A}
    	\intV{\sigma \nabla(A w) \cdot \nabla v} =  -\intV{  \sigma w \nabla u(\sigma) \cdot \nabla v}\,, \qquad \forall \, v \in V \,.
  	\end{equation}
\end{theorem}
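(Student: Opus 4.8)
The plan is to establish the defining relation of the adjoint,
\[
\langle F'(\sigma)h, w\rangle_{\LtO} = \langle h, F'(\sigma)^* w\rangle_{\HsO}, \qquad \forall\, h \in \HsO,\ w \in \LtO,
\]
by an adjoint-state (duality) argument. First I would insert the derivative formula \eqref{def_DF} and split
\[
\langle F'(\sigma)h, w\rangle_{\LtO} = \intV{h\,\abs{\grad u}^2\, w} + 2\intV{\sigma\, w\, \grad u \cdot \grad(u'(\sigma)h)}.
\]
The first term is already an $\LtO$-pairing of $h$ against $w\abs{\grad u}^2$, so the real work is to rewrite the second term, in which $h$ enters only implicitly through $u'(\sigma)h$.

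The key step is to eliminate this implicit dependence via the auxiliary state $Aw$. Writing $z := u'(\sigma)h \in V$, its defining weak form — obtained by differentiating the weak formulation of the forward problem \eqref{eq_1}, \eqref{eq_neu} in direction $h$ — reads
\[
\intV{\sigma\, \grad z \cdot \grad v} = -\intV{h\, \grad u \cdot \grad v}, \qquad \forall\, v \in V.
\]
I would then cross-test the two variational problems: choosing $v = z$ in \eqref{def_A} and $v = Aw$ in the identity above, both expressions contain the symmetric term $\intV{\sigma\,\grad(Aw)\cdot\grad z}$. Eliminating it yields
\[
\intV{\sigma\, w\, \grad u \cdot \grad z} = \intV{h\, \grad u \cdot \grad(Aw)},
\]
which transfers the offending factor from $z$ back onto $h$.

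Substituting this into the split expression gives
\[
\langle F'(\sigma)h, w\rangle_{\LtO} = \intV{h\left(w\abs{\grad u}^2 + 2\,\grad u \cdot \grad(Aw)\right)} = \left\langle h,\ w\abs{\grad u}^2 + 2\,\grad u \cdot \grad(Aw)\right\rangle_{\LtO}.
\]
It then remains to pass from the $\LtO$-pairing to the $\HsO$-pairing: since $h \in \HsO$, the embedding-adjoint identity \eqref{def_Es} rewrites $\langle h, \phi\rangle_{\LtO}$ as $\langle h, \Es^*\phi\rangle_{\HsO}$ with $\phi = w\abs{\grad u}^2 + 2\,\grad u\cdot\grad(Aw)$, producing exactly \eqref{eq_adjoint}. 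Uniqueness of the Riesz representative then identifies $F'(\sigma)^*w$.

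The main obstacle I anticipate is not the algebra but the functional-analytic bookkeeping that legitimizes each step. One must check that the right-hand side of \eqref{def_A} defines a bounded functional on $V$ so that $Aw$ exists by Lax--Milgram, and that $\phi \in \LtO$ so that $\Es^*$ applies. Both rest on the regularity available in $\Ds(F)$: for $s > N/2+1$ one has $\sigma \in C^{0,1}(\overline\Omega)$ and $h \in \Li(\Omega)$, while elliptic regularity gives $u \in \HtO$ with $\grad u$ sufficiently integrable (by Sobolev embedding and Hölder's inequality) to control the products $w\abs{\grad u}^2$ and $\grad u\cdot\grad(Aw)$. Verifying that $\grad u$ lies in a high enough $L^p$ (ideally $\Li$) for the triple products to be finite is the only genuinely delicate point; the duality computation itself is standard.
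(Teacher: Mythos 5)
Your proposal is correct and follows essentially the same route as the paper's proof: split the $\LtO$-pairing using the derivative formula \eqref{def_DF}, cross-test the variational problems \eqref{def_A} and \eqref{udiff} (taking $v = u'(\sigma)h$ in the former and $v = Aw$ in the latter) to move the dependence from $u'(\sigma)h$ onto $h$, and then convert the resulting $\LtO$-pairing into the $\HsO$-pairing via the embedding adjoint \eqref{def_Es}. The regularity bookkeeping you flag (boundedness of the functional in \eqref{def_A}, membership of the argument of $\Es^*$ in $\LtO$) is a reasonable extra precaution that the paper leaves implicit, but it does not change the argument.
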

\begin{proof}
By Proposition~\ref{thm_DF} we have 
  \begin{equation*}
   \begin{split}
     & \spr{F'(\sigma)h, w}_{\LtO} = \spr{h \abs{\grad u(\sigma)}^2 +2 \sigma \nabla u(\sigma) \cdot \nabla (u'(\sigma)h), w}_{\LtO}
     \\
     & \qquad= \spr{h,w \abs{\grad u(\sigma)}^2 }_{\LtO} + 2 \intV{ \sigma w \nabla u(\sigma) \cdot \nabla (u'(\sigma)h) }  \,.
   \end{split} 
  \end{equation*}
Together with \eqref{def_A} and \eqref{udiff}, there follows 
  \begin{equation*}
   \begin{split}
 	& \intV{ \sigma w \nabla u(\sigma) \cdot \nabla (u'(\sigma)h) }  
	= - \intV{\sigma \nabla (A w) \cdot \nabla (u'(\sigma)h)}  
     \\
     & \qquad = \intV{ h \nabla u(\sigma) \cdot \nabla (A w) }  \,,
   \end{split} 
  \end{equation*}
which, together with \eqref{def_Es} implies
  \begin{equation*}
   \begin{split}
 	 & \spr{F'(\sigma)h, w}_{\LtO} = \spr{h, w\abs{\grad u(\sigma)}^2 + 2 \nabla u(\sigma) \cdot \nabla (A w)}_{\LtO}  
     \\
     & 
     \qquad= \spr{h, \Es^*\left(w\abs{\grad u(\sigma)}^2  + 2 \nabla u(\sigma) \cdot \nabla (A w) \right)}_{\HsO}\,,
   \end{split} 
  \end{equation*}
which yields the assertion.
\end{proof}

\begin{remark}
  If $s$ is an integer, we can also consider the following inner product on $\HsO$
	\begin{equation*}
		\sspr{u,v} := \sum\limits_{\abs{\alpha}\leq s} \beta_\alpha \spr{\partial^\alpha u, \partial^\alpha v}_\LtO  \,,
	\end{equation*}
where $\{\beta_\alpha \}$ is a family of positive weights. The resulting inner product generalizes the standard inner product $\spr{.\,,.}_\HsO$ and induces an equivalent norm on $\HsO$. The adjoint of the operators $F : \Ds(F) \to \LtO$ with respect to these inner products can be computed in the same way as in Theorem~\ref{thm_adjoint_Fs}, with $\Es^*$ replaced by $\Esb^*$, where $\Esb^*w \in \HsO$ is given as the unique solution of the variational problem
	\begin{equation}\label{def_Esb}
		\sspr{\Esb^*w,v} = \spr{w,v}_\LtO \,, \qquad \forall \, v \in \HsO \,.
	\end{equation}
Using this weighted inner product gives us more flexibility in the reconstruction process, as we can put emphasis on different derivatives of the solution. A similar generalization of the scalar product is also possible for $\HsO$ with $s\in\R$.
\end{remark}

\subsection{The Multiple Measurement Case}\label{sect_multi}

As mentioned in the introduction, having the internal power density for one boundary condition is in general not sufficient to uniquely reconstruct the conductivity. To consider multiple data we introduce  $\{g_j\}_{j=1}^M$ of boundary current data such that  $g_j \in H^{\frac{1}{2}}(\bO)$, for $j \in \{1,\dots,M\}$ where $M \in \N$ is fixed. Furthermore, denote by $E_j$ the power density
		\begin{equation*}
			E_j(\sigma) := \sigma \abs{\grad u_j(\sigma)}^2 \,,
		\end{equation*}
	where $u_j(\sigma)$ is the weak solution of the boundary value problem
		\begin{equation}\label{prob_forward_multi}
		  \begin{split}
		      -\div{\sigma \nabla u_j} & = 0 \,, \quad \text{in} \; \Omega \,, \\
		      (\sigma \nabla u_j) \cdot \n \,|_{\bO} & = g_j \,.
		  \end{split}	
		\end{equation}

This problem can again be written as a nonlinear inverse problem in standard form, or rather, as a nonlinear system in standard form, by introducing the nonlinear operator
	\begin{equation}\label{def_F_multi}
	\begin{split}
		F \,:\, \Ds(F)  \to \LtO^M \,, 
		\quad
		\sigma \mapsto \Kl{E_j(\sigma)}_{j=1}^M \,.
	\end{split}
	\end{equation}

Continuity and Fr\'echet differentiability readily translate from $F$ \eqref{def_F} in the single measurement case to $F$ defined by \eqref{def_F_multi}. For example, for the Fr\'echet derivative we have
	\begin{equation}\label{def_DF_multi}
	\begin{split}
		F'(\sigma)h := \Kl{ h \abs{\grad u_j(\sigma)}^2 + 2\, \sigma \grad u_j(\sigma) \cdot \grad( u_j'(\sigma)h) }_{j=1}^M \,,
	\end{split}
	\end{equation}
with $u_j'(\sigma)h$ being given analogously as in \eqref{udiff}, and for the adjoint we have
	\begin{equation}\label{def_DF_adj_multi}
	\begin{split}
		F'(\sigma)^*w := \sum\limits_{j=1}^{M} \Es^*\kl{ w_j \abs{\grad u_j(\sigma)}^2 + 2\, \sigma \grad u_j(\sigma) \cdot \grad( Aw_j)}\,.
	\end{split}
	\end{equation}

\section{Iterative Regularization Approach} \label{sec_Regular}

Both the single and the multiple measurement problems of the previous section are inverse problems in the standard form
	\begin{equation*}
	F(x) = y\,,
	\end{equation*}
and therefore, need to be regularized in order to enable a stable reconstruction of the conductivity $\sigma$ from noisy measurement data $\Ed$. Besides the well-known Tikhonov regularization and its variants \cite{Engl_Hanke_Neubauer_1996}, iterative regularization methods are also very popular, especially for nonlinear Inverse Problems \cite{Kaltenbacher_Neubauer_Scherzer_2008}. Since the focus of this paper lies more on qualitative and quantitative aspects of the solution and less on numerical efficiency, we focus on the following simple yet robust Landweber-type gradient method, given by
    \begin{equation}\label{Landweber}
        \begin{split}
            \xkpd = \xkd + \okd\kl{\xkd} \skd\kl{\xkd}
            \,,
            \\
            \skd\kl{x} := F'\kl{x}^*\kl{\yd - F\kl{x}} \,,
        \end{split}
    \end{equation}
where for the stepsize $\okd$ we use the steepest descent stepsize \cite{Scherzer_1996} 
    \begin{equation}\label{stepsize}
        \okd(x) := \frac{\norm{\skd\kl{x}}^2 }{\norm{F'(x)\skd(x)}^2} \,.
    \end{equation}
As a stopping criterion, we employ the well-known Morozov discrepancy principle \cite{Morozov_1984}, i.e., the iteration is stopped after $\ks$ steps, with $\ks$ satisfying
    \begin{equation}\label{discrepancy}
        \norm{\yd - F\kl{\xksd}} \le \tau \delta \le \norm{\yd - F\kl{\xkd}}\,, \qquad 0\le k \le k_*\,,
    \end{equation}
where $\tau$ is an appropriately chosen positive number ($\tau \in [1,2]$ being common practise) and $\delta$ is the error level satisfying the error estimate $\norm{y-\yd} \le \delta$.

\begin{remark}
Note that for proving the convergence of iterative regularization methods one requires at least a weak form of the so-called nonlinearity or tangential cone condition (see \cite{Kaltenbacher_Neubauer_Scherzer_2008} for details). This condition is to the best of our knowledge  not known for this particular problem.
\end{remark}

\section{Ill-Posedness Quantification}\label{sect_illp_quant}

In order to get a better understanding of the reconstruction quality in different areas of the domain, we also consider an ill-posedness quantification of the problem based on the singular value decomposition (SVD) of the discretization of the Fr\'echet derivative of $F$ at the exact solution $\sD$.

For linear operators $F$, the degree of ill-posedness of the inverse problem $F(x) = y$ is directly connected to the singular value expansion of $F$ \cite{Engl_Hanke_Neubauer_1996}, a rapid decay of the singular values corresponding for example to severe ill-posedness of the problem.
In the nonlinear case, the connection between the ill-posedness and the Fr\'echet derivative $F'(x)$ is not as strong as one might expect it to be (see for example \cite{Schock_2002,Engl_Kunisch_Neubauer_1989}). However, in many cases there is a connection, as can for example be seen from the assumption
	\begin{equation}
		\norm{F'(\xD)h}_Y \geq c \norm{h}_{-a} \,, \qquad \forall \, h \in X \,,
	\end{equation} 
commonly used for analyzing iterative methods in Hilbert scales \cite{Neubauer_2000}. Here the parameter $a$ effectively measures the degree of ill-posedness of the problem. Furthermore, since almost all methods for solving ill-posed problems rely on the Fr\'echet derivative of $F$, information about the expectable quality of the reconstruction may be obtained from this Fr\'echet derivative.

Given the two finite element basis $\Kl{\phi_i}$ and $\Kl{\psi_i}$ of the data and the image space of $F$ used in the discretization of the inverse problem, the transfer matrix $T$ of the discretization of the Fr\'echet derivative of $F$ is given by
	\begin{equation}\label{transfer_matrix}
		T_{i,j} := \spr{F'(\sD)\phi_i,\psi_j}_\LtO \,.
	\end{equation}
In Section~\ref{sect_num_res}, we compute $T$ and its SVD for different boundary condition settings corresponding to various parts of the boundary being inaccessible for measurements. The resulting singular values and singular vectors are then analyzed and correlated to the obtained reconstructions for each considered setting.

\section{Numerical Algorithm}
In Section \ref{sec_Regular}, we outlined a regularization approach for solving the inverse problem \eqref{Fx=y} which is based on the Landweber-type iterartion \eqref{Landweber}. In this section, we shortly describe how this algorithm is implemented for a single measurement.
In pseudocode notation it takes the following form:

	\begin{algorithm}[H]
		\KwData{Power density data $E^\delta$.}
		\KwIn{Initial guess $\sigma_0$, parameter $\tau$, noise level $\delta$.}
		\KwResult{Reconstructed conductivity $\sigma_{k^*(\delta, \Ed)}$.}
    	\Begin{
		$k \gets 0$
		\newline
		$\sigma_k \gets \sigma_0$
		\newline
		\Repeat{Residual norm $\norm{E^\delta - F(\sigma_k)} \le \tau \delta$}
		{
			Find the potential $u_k$ as a solution to \eqref{eq_1}, \eqref{eq_neu} with $\sigma = \sigma_k$.
			\newline
			Calculate the power density $F(\sigma)$ using \eqref{eq_powden}  with $u=u_k$, $\sigma = \sigma_k$.
			\newline
			Find $Aw$ as a solution of \eqref{def_A} with $u=u_k$, $w=\Ed-F(\sigma)$, $\sigma = \sigma_k$.
			\newline
			Calculate $F'(\sigma)^*w$ using \eqref{eq_adjoint} with $u = u_k$, $Aw$, $w=\Ed-F(\sigma)$, $\sigma = \sigma_k$, and solving \eqref{def_Es}. 
			\newline
			Calculate stepsize $\okd$ in several steps:
			\newline
			Find $u'(\sigma)h$ as a solution of \eqref{udiff} with $h = F'(\sigma)^*w$, $\sigma = \sigma_k$.
			\newline
			Calculate $F'(\sigma)h$ using \eqref{def_DF} with $u = u_k$, $u'(\sigma)h$, $h = F'(\sigma)^*w$, $\sigma = \sigma_k$.
			\newline
			Calculate $\okd$ using \eqref{stepsize}.			
			\newline
			Update $\sigma_{k+1}=\sigma_k+\okd F'(\sigma_k)^*w$.
			\newline
			$k \gets k+1$
		}
		}
		
	\caption{Reconstruction of the electrical conductivity from a single measurement of the power density.}
	\end{algorithm}
\vspace{0.5cm}	
The variational problems can be solved by standard finite element approaches, see below for details. Obviously, the above algorithm can be generalized to the multiple measurement case.


\section{Numerical Setting and Implementation Details}\label{sec_NumDetails}

We now describe the precise setting of our numerical example problem. For the domain $\Omega$, we choose a unit disk in $2D$, i.e., in polar coordinates,
	\begin{equation*}
	\begin{split}
		\Omega := \Kl{(r,\theta) \in [0,1) \times [0,2\pi]} \,.
	\end{split}
	\end{equation*}
For the accessible boundary $\Gamma_1$ we choose the family of subsets $\Gamma(\alpha) \subset \bO$ defined by  
	\begin{equation*}
		\Gamma(\alpha) := \Kl{(r,\theta) \in \{1\}\times [0,\alpha]}\,,
	\end{equation*}
and we set
	\begin{equation}\label{bdc_theta}
    	g_j(r,\theta) := \sin\kl{\frac{2 j \pi \theta}{\alpha}} \,, 
    	\qquad \forall \, (r,\theta) \in \Gamma(\alpha) \,.
	\end{equation} 	
On the remaining part of the boundary, we always assume that $g_j = 0$.
The resulting boundary functions $g_j$ are continuous on $\Gamma$ and satisfy \eqref{cond_compatibility}.  The trigonometric functions \eqref{bdc_theta} are a natural choice for current density patterns \cite{Mueller_Siltanen_2012}.  Being normed, they represent elements of an orthonormal basis of the space $L^2(\Gamma(\alpha))$. Moreover, this choice of boundary functions guarantees a similar magnitude of the computed power densities $E_i$, which ensures that every power density contributes evenly to the reconstruction.

Note that for the choice of the accessible boundary $\Gamma_1$ we consider single closed intervals.  It would also, for example, be possible to choose $\Gamma_1$ as consisting of multiple disjoint intervals, but in any case, the effect of various limited angle cases can already be observed in the single interval setting considered here.

For the true conductivity $\sD$ we use the phantom depicted in Figure~\ref{fig_phantom}. It has a uniform background of value $1$ as well as three inclusions: two circular inclusions of magnitude $1.3$ and $2$, respectively, and a crescent shaped inclusion of magnitude $1.7$, which are slightly smoothed towards their edges to conform with the smoothness requirements, since due to \eqref{def_Ds}, for $\sigma$ to be in $\Ds(F)$ it has to be $\Ht$ smooth. In order to implement this, we use $2$D bump functions built from piecewise polynomial functions, where the polynomials are chosen in such a way that the resulting bump function is $C^2$.
	\begin{figure}[H] \centering
		\includegraphics[width=0.36\textwidth]{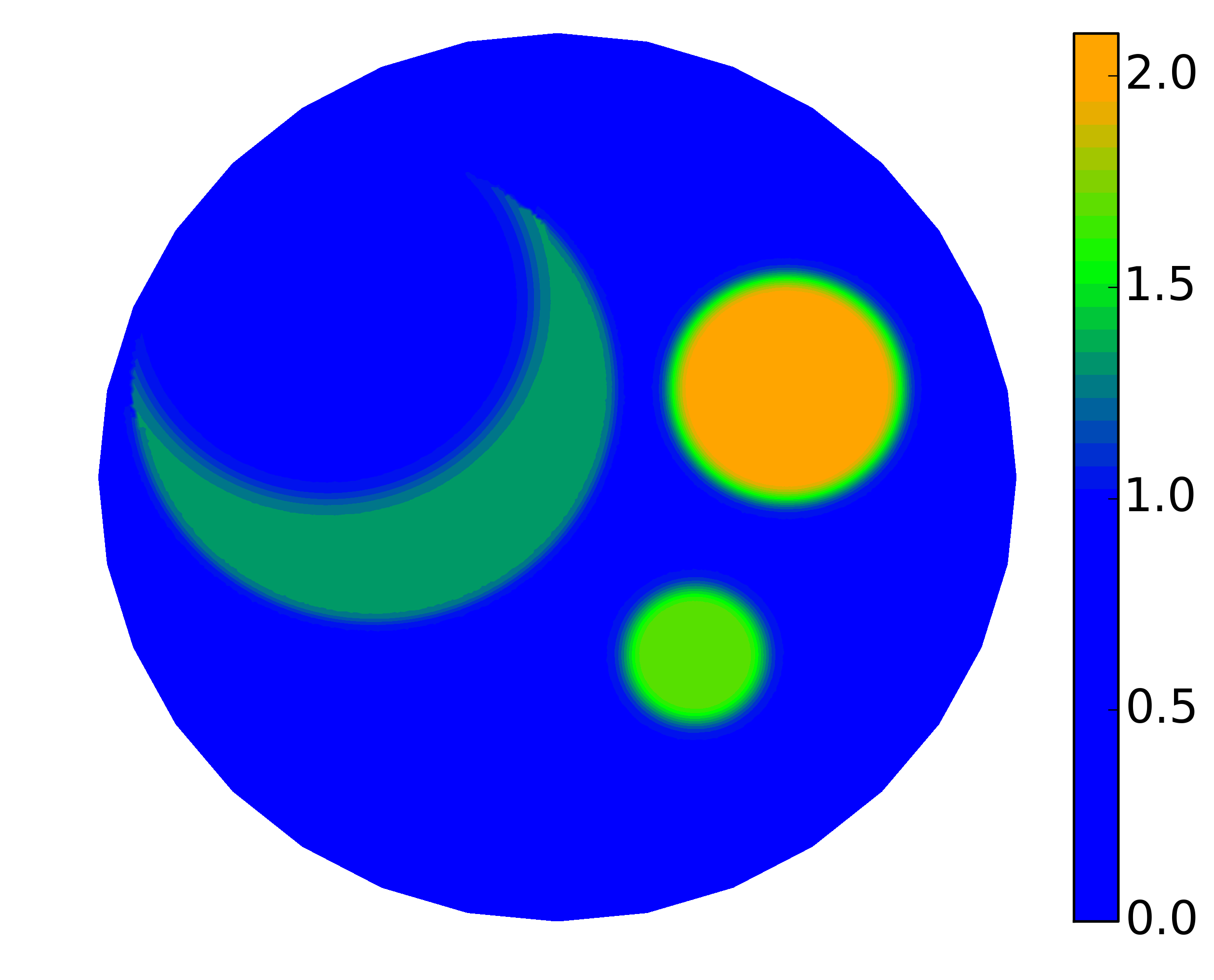}
		\caption{Exact value of the electrical conductivity $\sD$.}
		\label{fig_phantom}
	\end{figure}  

The discretization, implementation and computation of the involved variational problems was done using Python and the library FEniCS \cite{Alnaes_Blechta_2015}. A triangulation with approximately $2000$ vertices for discretizing the domain was used. This rather coarse choice of the discretization is due to time limitations in the computation of the SVD, since computing the matrix \eqref{transfer_matrix} already takes approximately $5$ hours for this discretization level, see Section~\ref{sect_num_res}. The power density data $E(\sD)$ was created by applying the forward model to $\sD$ using a finer discretization with approximately $40000$ vertices to avoid an inverse crime. The resulting power densities are depicted in Figure~\ref{fig_powden_smooth_100_75_50} for the angles $\alpha=2\pi$, $\alpha= 3\pi/2$, and $\alpha = \pi$, respectively. The red circle (segment) in the figures indicate the available, i.e., non-zero, boundary. Accessibility of the boundary is reflected in the power densities: in Figure~\ref{fig_powden_smooth_100_75_50}, the angle $\alpha=2\pi$, we clearly see the internal structure such as the location of the inclusions, while for the angles $\alpha= 3\pi/2$ and $\alpha = \pi$ only some of it, but less than before, is visible. Furthermore, the potentials induced by the boundary functions $g_j$ for $j=2,3$ have a higher frequency and do not penetrate deep into the domain.
Different random noise with a relative noise level of $5 \%$ is added to the power density to obtain the noisy data $\Ed$, i.e., $\Ed = E + \delta^{rel} \norm{E} \tilde{e}/\norm{\tilde{e}}$, where $\tilde{e}$ is a normally distributed random noise vector and $\delta^{rel}$ is the relative noise level. Obviously, with this choice one has an absolute noise in the data of $\delta=\delta^{rel}\norm{E}$.

	\begin{figure}[H] \centering
		\includegraphics[width=0.9\textwidth, trim={3cm 19cm 0cm 3cm}, clip]{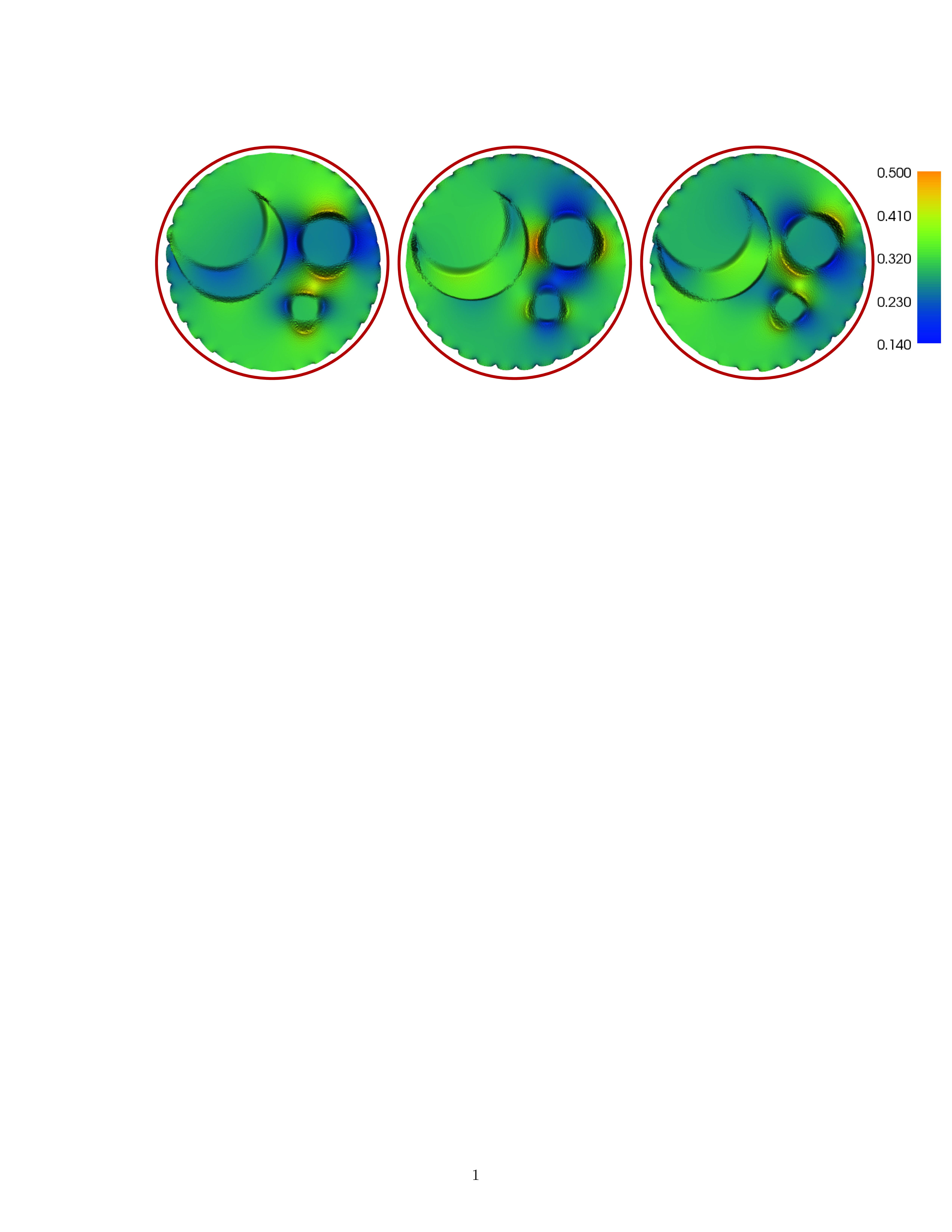}
		
		\includegraphics[width=0.9\textwidth, trim={3cm 19cm 0cm 3cm}, clip]{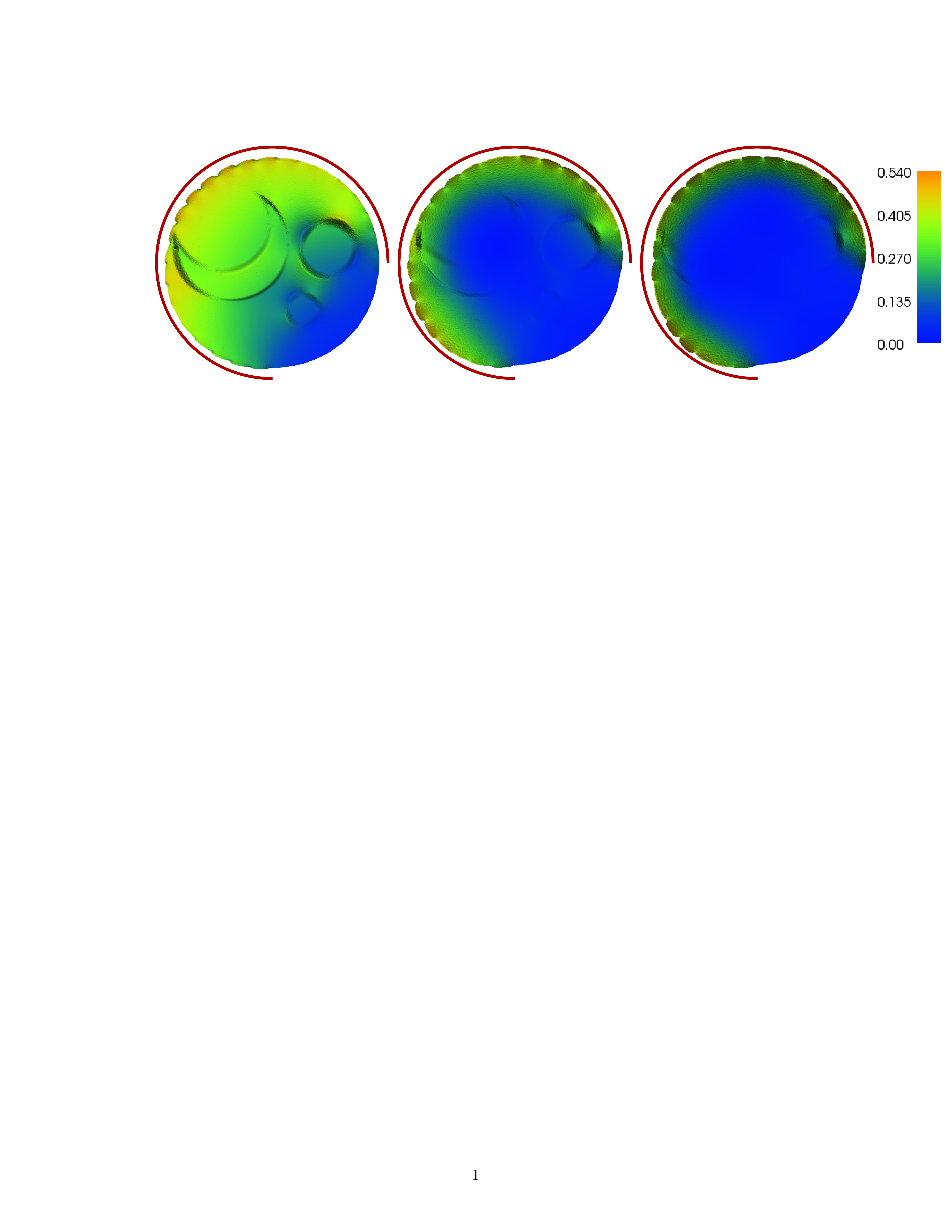}
		
		\includegraphics[width=0.9\textwidth, trim={3cm 19cm 0cm 3cm}, clip]{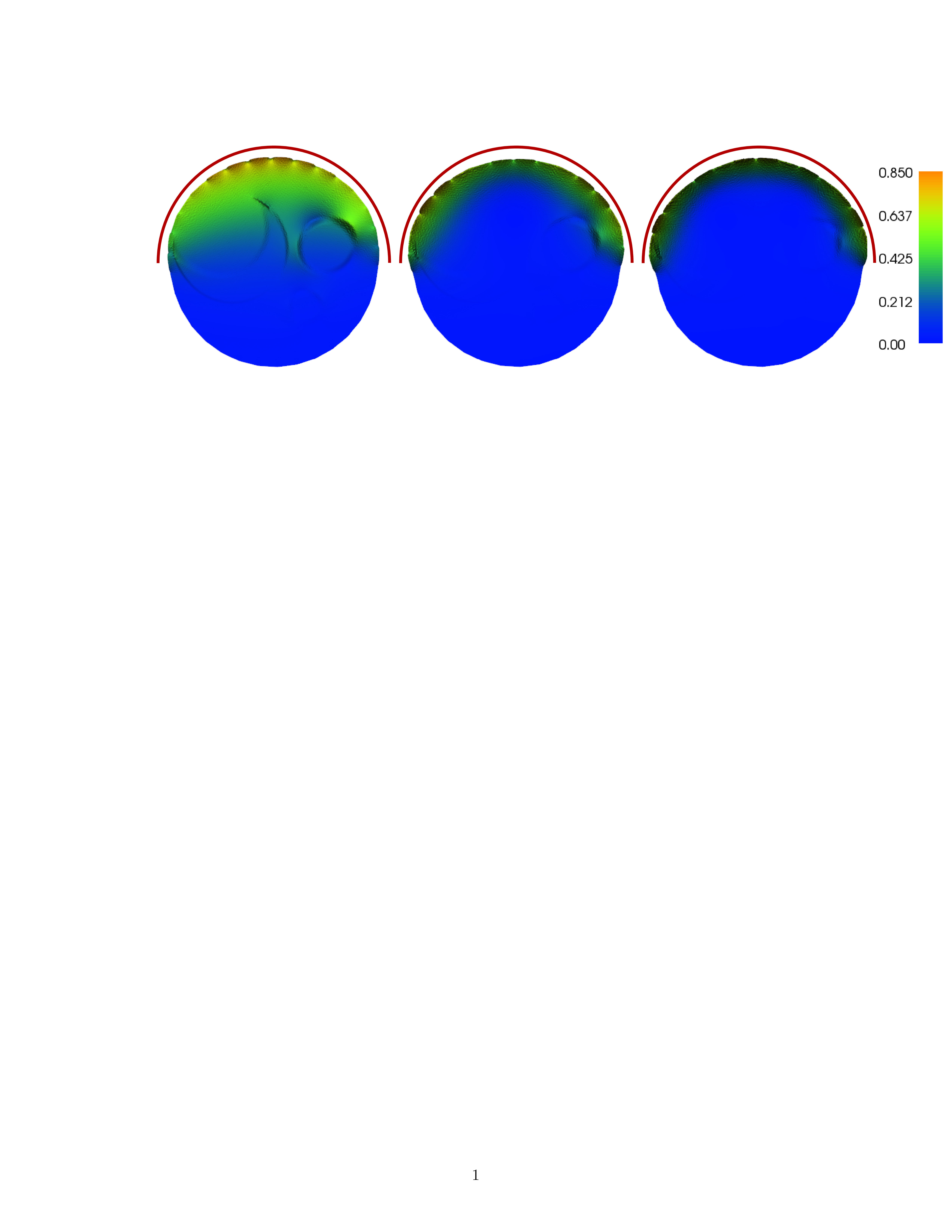}
		
		\caption{
		Power densities $E_j(\sD)$ with $\sD$ as in Figure~\ref{fig_phantom} with boundary data $g_j$, $j=1,2,3$ defined in: (first row) \eqref{bdc_special}; (second row) \eqref{bdc_theta} with $\alpha=3\pi/2$; (third row) \eqref{bdc_theta} with $\alpha=\pi$. The red curves indicate the support of $g_j$.}
		\label{fig_powden_smooth_100_75_50}
	\end{figure}
	
Since the domain $\Omega$ is two-dimensional, i.e., $N=2$, by the above analysis we should choose $s>2$ in the domain of $F$. However, since numerically there is hardly any difference between using $s=2$ and $s=2+\eps$ for $\eps$ small enough, and since $s$ should be kept as small as possible to avoid unnecessary smoothness requirements for the exact conductivity $\sD$, we choose $s=2$ for ease of implementation in the examples presented below. For obtaining the reconstructions, the steepest-descent Landweber method \eqref{Landweber} together with the discrepancy principle \eqref{discrepancy} with the canonical choice $\tau = 1$ was used. For the initial guess, $\sigma_0 = 1.5$ was used in all tests.

Furthermore, in all cases additional reconstructions are presented where instead of using $\Es$ in the adjoint of the Fr\'echet derivative the operator $\Esb$ defined by \eqref{def_Esb} was used with $s=2$ and the choice $\beta_\alpha = 1,10^{-3},10^{-6}$ for $\abs{\alpha}=0,1,2$, respectively. Moreover, we also present results in case that $\Es^*$ is dropped altogether in the reconstruction process, which can be seen as a preconditioning or in the light of regularization in Hilbert scales \cite{Neubauer_2000}. We refer to those cases as using the $\Htb$ or the $\Lt$ adjoint, while in the standard case we speak of using the $\Ht$ adjoint.

\section{Numerical Results}\label{sect_num_res}
In this section we present various numerical results for different boundary value settings. Hereby, an emphasis is placed on the limited angle case, i.e., that $g = 0$ on the inaccessible boundary part $\bO\setminus\Gamma(\alpha)$. For ease of writing, we refer to these cases by the percentage value of the available boundary, e.g., we say that $75\%$ of the boundary is available for measurements if $\alpha = 3 \pi /2$. We consider the cases of $25\%$, $50\%$, $75\%$, and $100\%$  available boundary in this section. Moreover, we present an ill-posedness quantification of the problem based on the singular value decomposition of the Fr\'echet derivative of $F$ in Section~\ref{sect_illp_quant}.

\subsection{Reconstructions without Noise}

Before considering the noisy data case of interest to us, we first present two examples where no noise was added to the data. Since the discrepancy principle is not a suitable stopping rule in case of no noise, the iteration has to be stopped differently. Due to computational limitations and since the iterative procedure does not make much progress from this point onwards, the process was stopped after $1000$ iterations in both cases.

\begin{example}\label{ex_100_no_noise}
As a first test we look at the reconstruction of the conductivity for a fully available Neumann boundary and three power density measurements. Contrary to all the other tests, here we have a different set of boundary functions, namely  
	\begin{equation*}
  		g_{1}=\sin(\theta)\,, \qquad g_{2}=\cos(\theta)\,, \qquad g_{3}=(\sin(\theta)+\cos(\theta))/\sqrt{2} \,.
  		\label{bdc_special}
	\end{equation*}
After $1000$ iterations we obtain the reconstructions for the $\Lt$, $\Htb$ and $\Ht$ adjoint case depicted in Figure~\ref{fig_recon_100_no_noise}. The resulting reconstructions look rather similar, which is due to the fact that without noise, the residual $F(x)-y$ is already smooth and hence, the various smoothing properties of the different adjoints do not have much additional effect. However, they differ in the noisy case, where the $\Htb$ adjoint performs somewhat better than the others (see Section~\ref{sect_num_noise}).

	\begin{figure}[H] \centering
		\includegraphics[width=0.9\textwidth, trim={3cm 19cm 0cm 3cm}, clip]{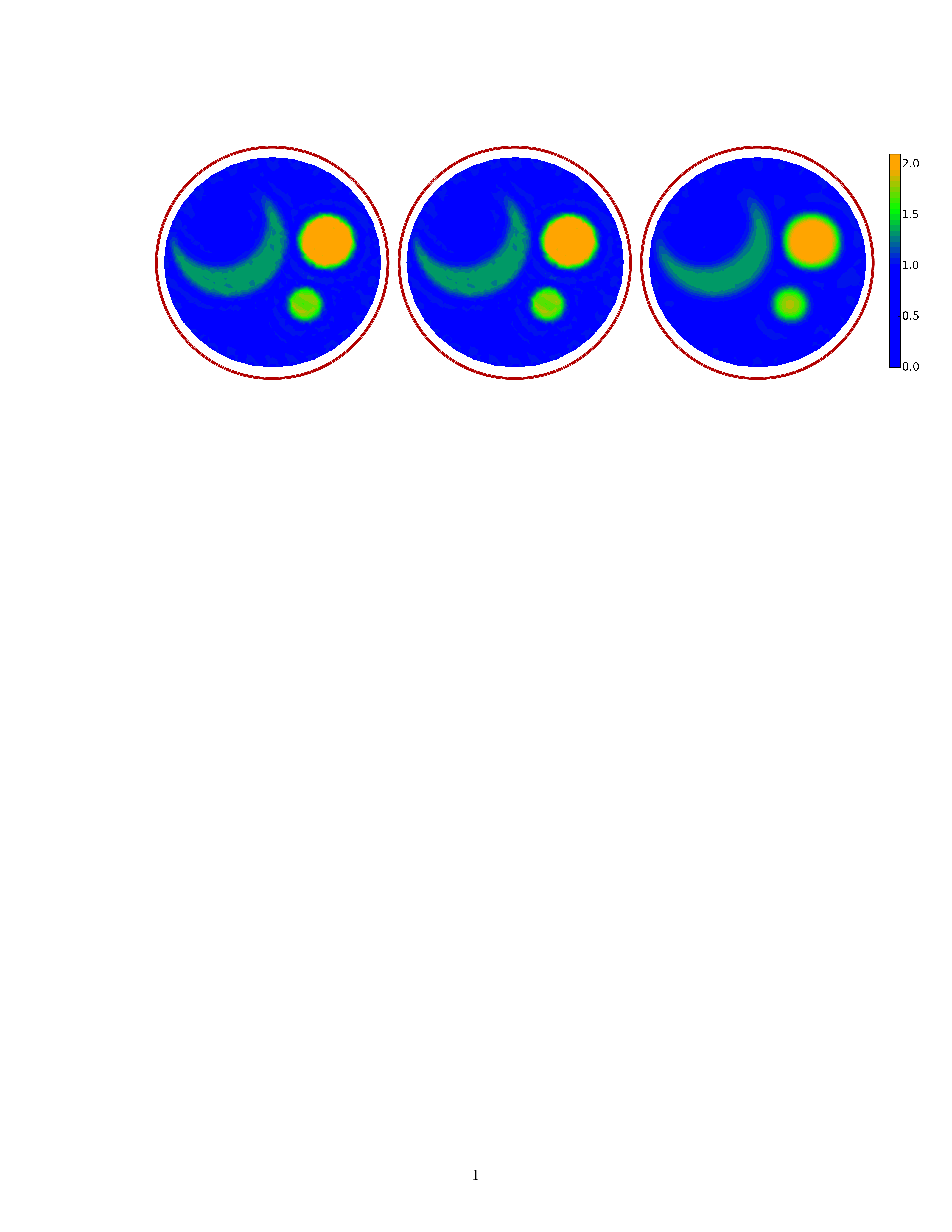}
		\caption{
		Reconstruction of conductivity $\sD$, Figure~\ref{fig_phantom}, with boundary data $g_j$, $j=1,2,3$ defined in \eqref{bdc_special}. The red curves indicate the support of $g_j$. From left to right: the $\Lt$, $\Htb$, and $\Ht$ adjoint is used.}
		\label{fig_recon_100_no_noise}
	\end{figure}
\end{example}

\begin{example}\label{ex_75_50_25_no_noise}
Following example \ref{ex_100_no_noise} we present reconstructions for $75\%, 50\%, 25\%$ boundary available for measurements with boundary data $g_j$, $j=1,2,3$ defined in \eqref{bdc_theta} and $\Htb$ adjoint, which are depicted in Figure~\ref{fig_recon_75_50_25_no_noise}.

	\begin{figure}[H] \centering
		\includegraphics[width=0.9\textwidth, trim={3cm 19cm 0cm 3cm}, clip]{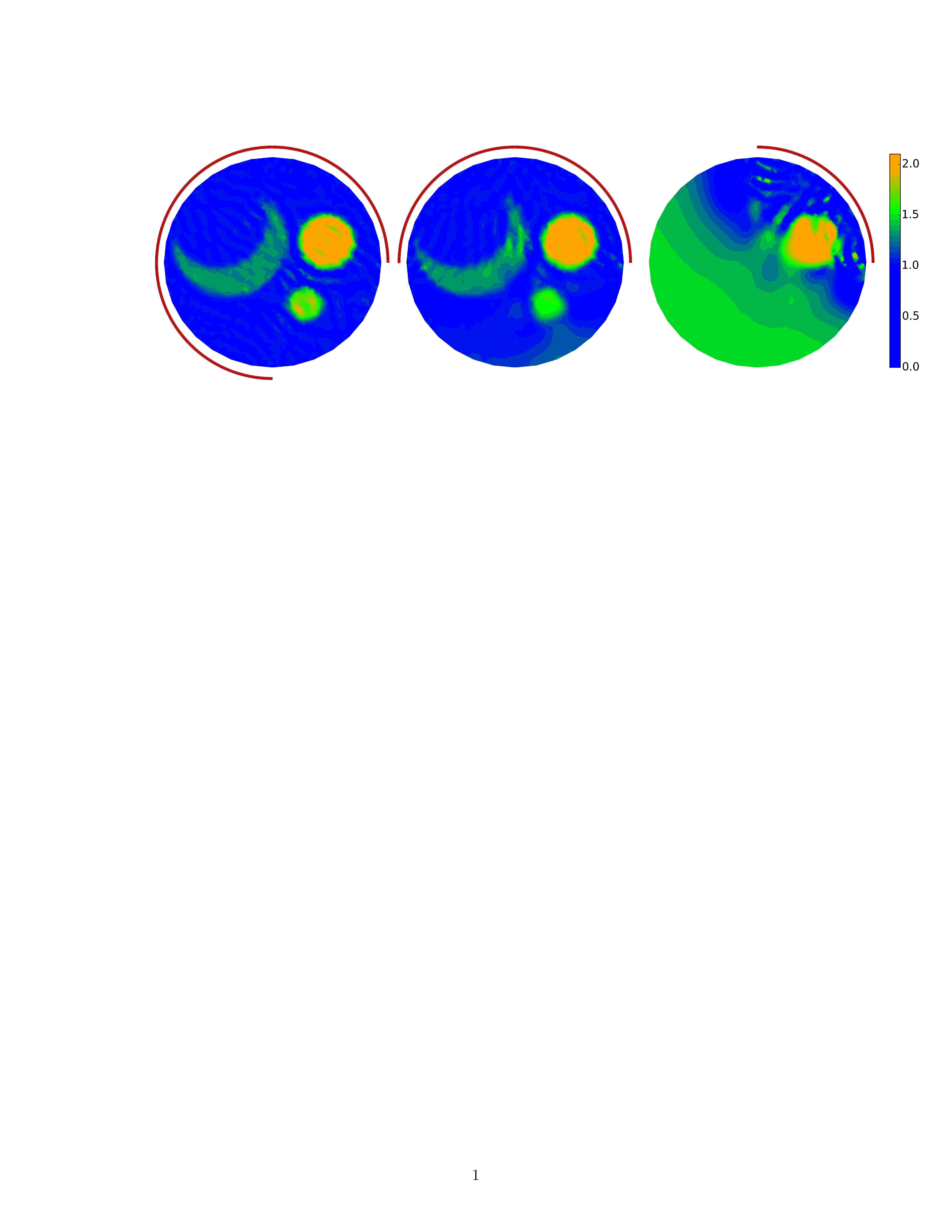}
		\caption{
		Reconstruction of conductivity $\sD$, Figure~\ref{fig_phantom}, with boundary data $g_j$, $j=1,2,3$ defined in \eqref{bdc_theta} from various limited angles. The red curves indicate the support of $g_j$. The $\Htb$ adjoint is used.}
		\label{fig_recon_75_50_25_no_noise}
	\end{figure}
\end{example}

\subsection{Reconstructions with Noise}\label{sect_num_noise}

After we saw in the previous section that reasonable reconstructions can be obtained in the case of noise-free data, in this section we focus on noisy data $\Ed$ with a noise level of $\delta = 5\%$. Again the focus is on different limited angle cases.

\begin{example}\label{ex_100}
We consider $100 \%$ boundary available for measurements with boundary data $g_j$, $j=1,2,3$ defined in \eqref{bdc_special}. The iteration terminated after $3$, $3$ and $74$ iterations for the $\Lt$, $\Htb$ and $\Ht$ adjoint case, respectively, and yielded the reconstructions depicted in Figure~\ref{fig_recon_100_75_50_25}. Even though the noise level is high, the conductivity $\sD$ is nicely reconstructed both in shape and quantity. The $\Lt$ adjoint does not give enough smoothness on the solution, which is visible in the non-sharp edges of the inclusions. Due to the high noise level, the discrepancy principle stops the iteration very early, which affects the contrast of the reconstructions. 
\end{example}

\begin{example}\label{ex_75}
Next we consider $75 \%$ boundary available for measurements with boundary data $g_j$, $j=1,2,3$ defined in \eqref{bdc_theta}. In this case the iteration stops after $16$, $10$ and $177$ steps for the $\Lt$, $\Htb$ and $\Ht$ adjoints, respectively, which leads to the reconstructions depicted in Figure~\ref{fig_recon_100_75_50_25}. As we can see, the missing data in the right bottom part of the power density in the Figures~\ref{fig_powden_smooth_100_75_50} (second row) transfers to the reconstructed conductivity through artefacts near the $\bO\setminus\Gamma(\alpha)$ boundary, where the background value and inclusions are not well reconstructed. Similarly to the previous example, the solution lacks smoothness with the $\Lt$ adjoint, but captures more of the internal structure compared to the $\Ht$ adjoint, which hardly detects the small circular inclusion. Meanwhile, the $\Htb$ adjoint exhibits a good trade-off result between the other two. 
\end{example}

\begin{example}\label{ex_50}
For $50 \%$ available boundary and three measurements we obtain the reconstructions depicted in Figure~\ref{fig_recon_100_75_50_25}. The discrepancy principle was satisfied after $44$, $38$ and $602$ iterations for the $\Lt$, $\Htb$ and $\Ht$ adjoints, respectively. In this test we see what happens when only half of the boundary is accessible and hence, half of the internal conductivity can be reconstructed, see Figure~\ref{fig_powden_smooth_100_75_50} (third row). The reconstructions are worse than in the previous examples, although we are able to obtain some information about the inclusions. The conductivity value of the big circle comes closer to the expected value and its shape remains  almost proper, while the crescent is only partly visible. The small circular inclusion cannot be reconstructed due to the lack of information in this area.
\end{example}

\begin{example}\label{ex_25}
As a last test we consider an available boundary of only $25 \%$ with three measurements. We obtain the reconstructions depicted in Figure~\ref{fig_recon_100_75_50_25} after $1000$ iterations (the iteration was terminated even though the discrepancy principle was not reached due to time limitations). We can recover the big circle inclusion located close to the accessible boundary with some artefacts visible around it for the cases of the $\Lt$ and $\Htb$ adjoints. The $\Ht$ adjoint has a strong smoothing effect, which reduces the artefacts in the solution. Interestingly, even though only the large circle inclusion is recovered, this has a higher contrast than in the previous examples with noise.
\end{example}

	\begin{figure}[H]
		\centering
		\parbox{\textwidth}{\hspace{2cm} $\Lt$ adjoint \hspace{2cm} $\Htb$ adjoint \hspace{2cm} $\Ht$ adjoint}
		\includegraphics[width=0.9\textwidth, trim={3cm 19cm 0cm 3cm}, clip]{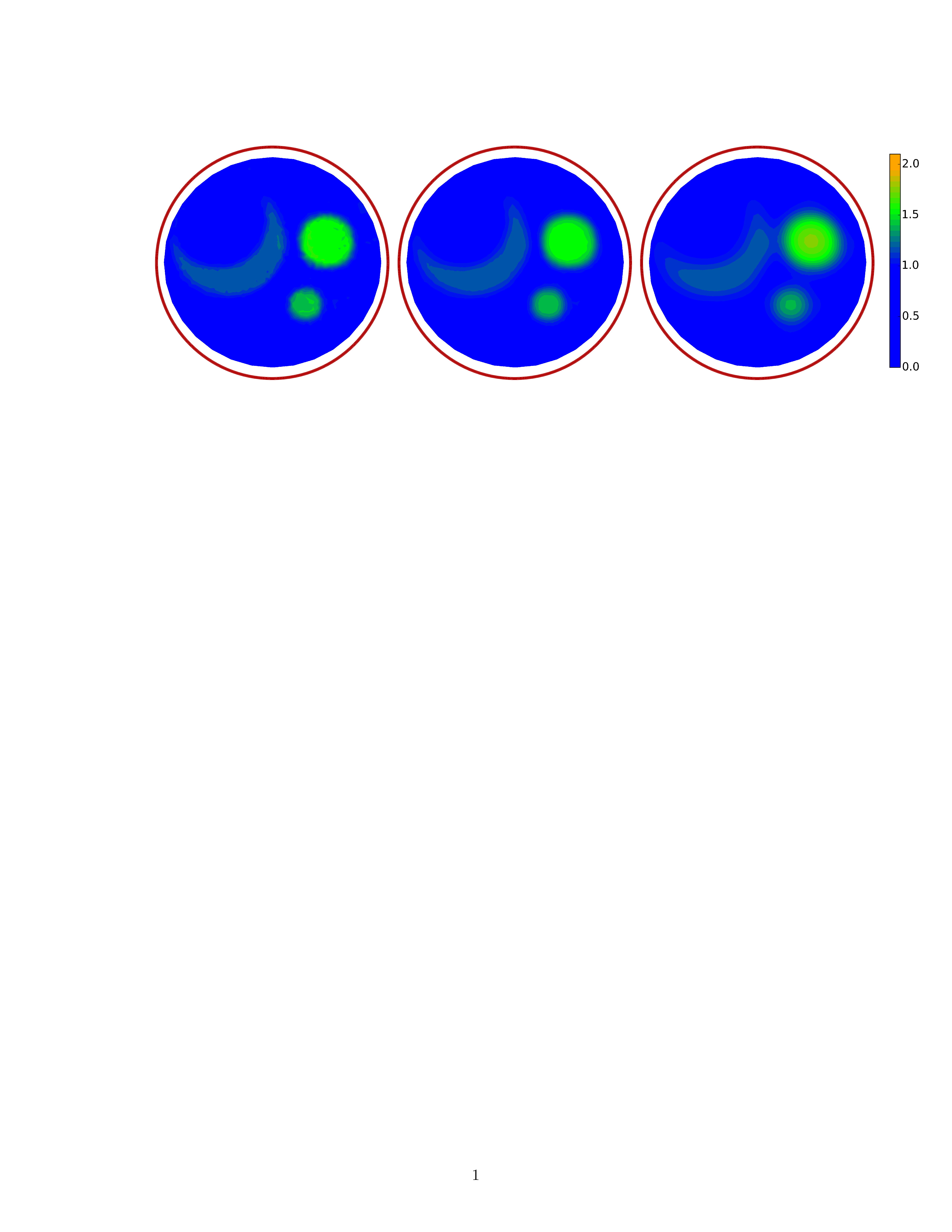}

		\includegraphics[width=0.9\textwidth, trim={3cm 19cm 0cm 3cm}, clip]{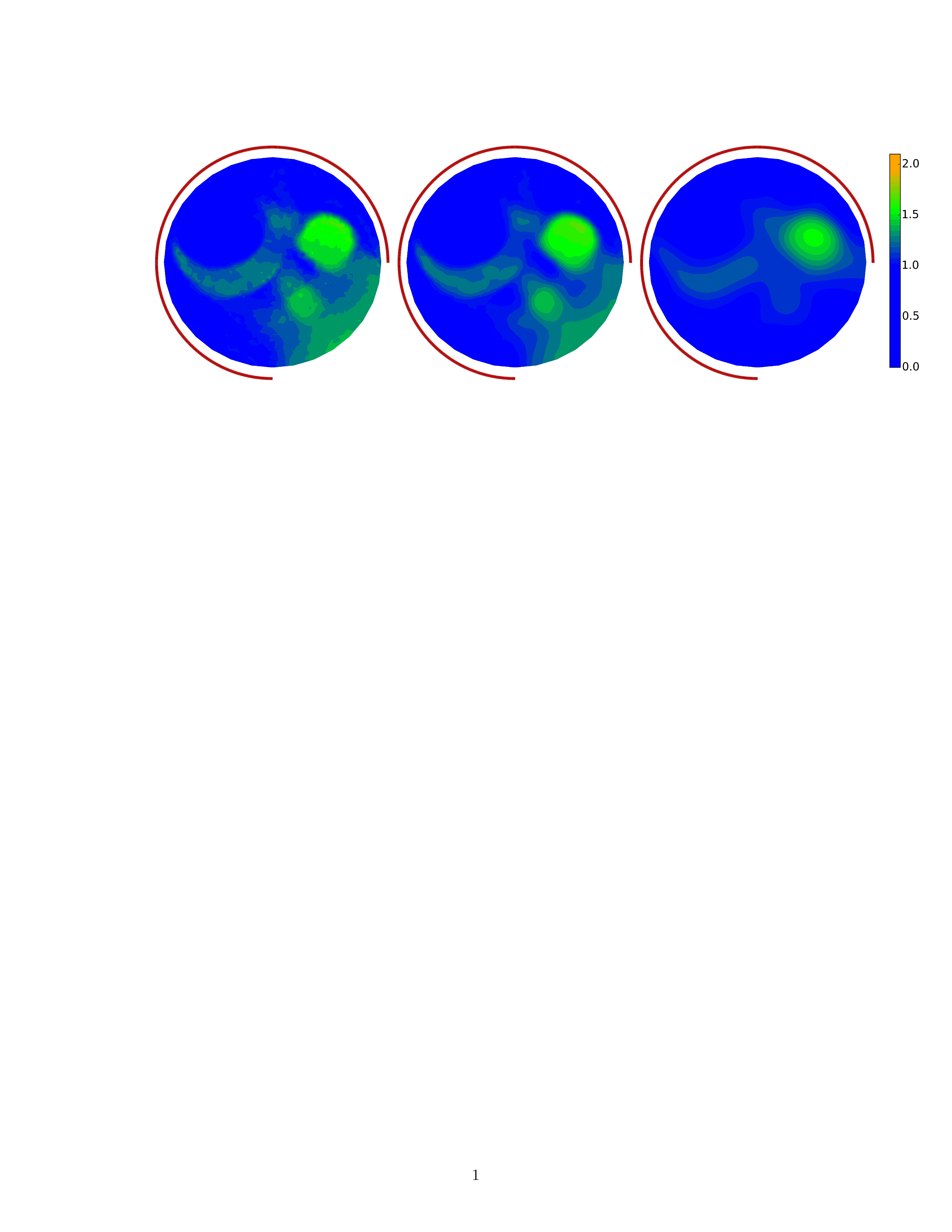}
		
		\includegraphics[width=0.9\textwidth, trim={3cm 19cm 0cm 3cm}, clip=true]{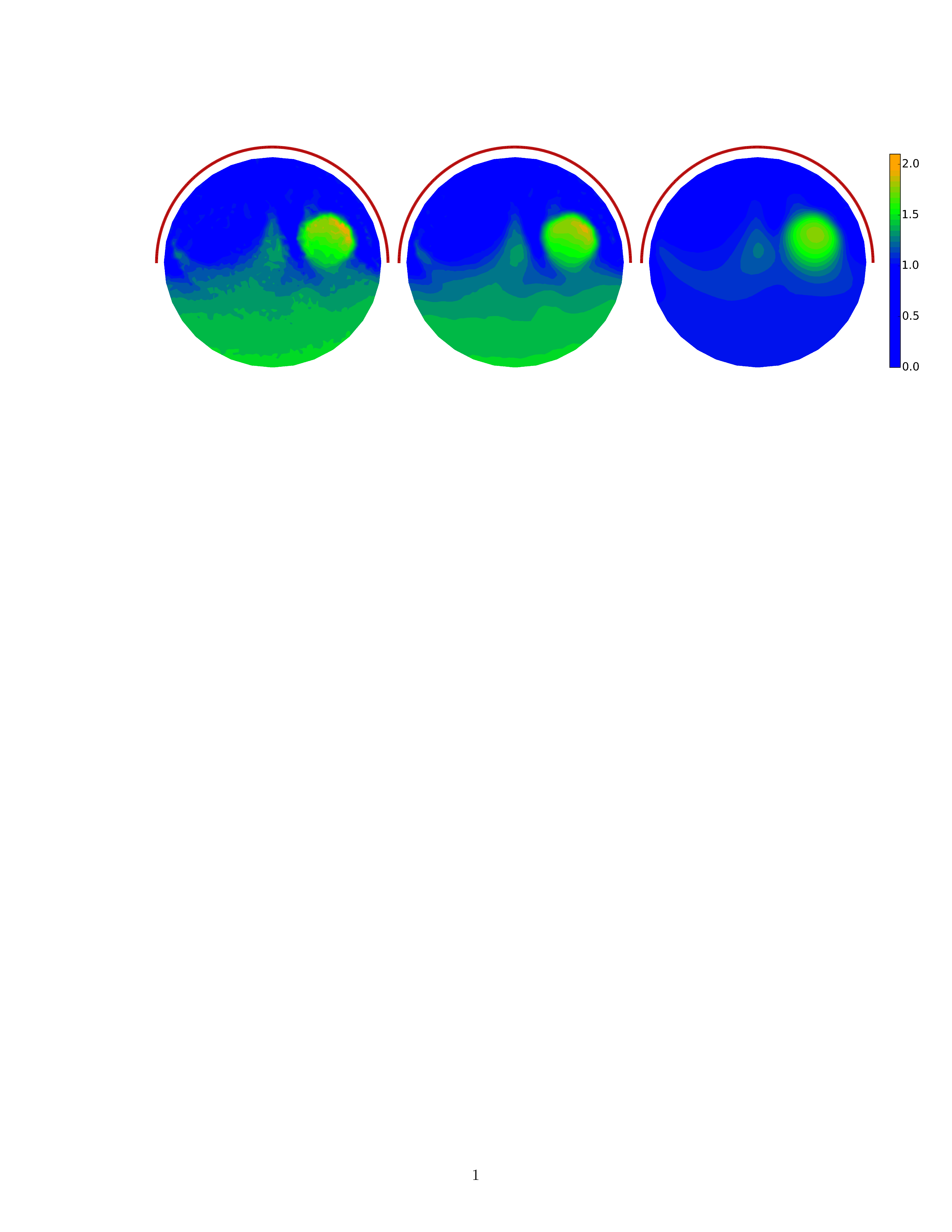}
		
		\includegraphics[width=0.9\textwidth, trim={3cm 19cm 0cm 3cm}, clip]{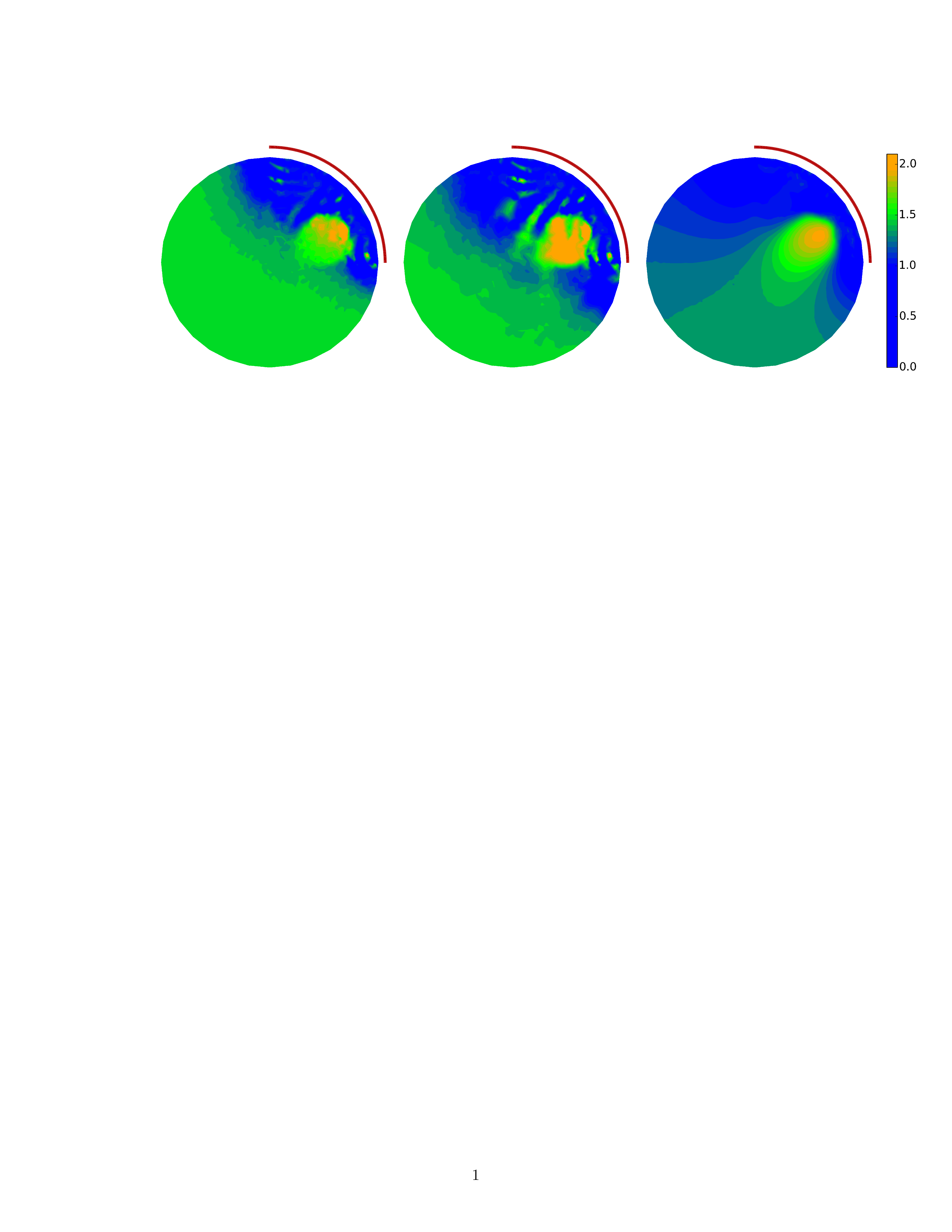}
		
		\caption{Reconstruction of conductivity $\sD$, Figure~\ref{fig_phantom}, from limited angle boundary conditions $g_j$, $j=1,2,3$. The red curves indicate the support of $g_j$. First column uses the $\Lt$ adjoint; second column, the $\Htb$ adjoint; and third  column, the $\Ht$ adjoint.}
		\label{fig_recon_100_75_50_25}
	\end{figure}

\subsection{Results of the Ill-Posedness Quantification}\label{sec_IllPnumer}

In this section, we present some results from the ill-posedness quantification introduced in Section~\ref{sect_illp_quant} and show that the varying reconstruction results obtained for the different limited angle cases nicely correspond to certain pairs of singular values and vectors obtained from the SVD of $T$ defined by \eqref{transfer_matrix}.

First, we look at the condition numbers of $T$ for different limited angles and numbers of power density measurements, which are given in Table~\ref{tab-cond}. The transfer matrix $T$ becomes more and more ill-conditioned with decreasing angle and number of measurements, and therefore, we should not expect good reconstructions, especially further away from the accessible boundary. Additionally, we can see that using two measurements instead of one reduces the condition number of $T$ drastically, which should be compared with the identifiability results discussed in Section~\ref{sect_multi}. However, the third measurement does not reduce the condition number, but it remains of the same order, and therefore obtaining reasonable reconstructions with two measurements promises good reconstruction results as well, and with a shorter computational time.

	\begin{figure}[H] \centering
		\renewcommand\figurename{Table}
\resizebox{0.8\textwidth}{!}{%
		\begin{tabular}{| c | c | r | r | r | r | }
		  \hline			
		   Number of & Boundary & \multicolumn{4}{ c |}{Limited angle, \%} \\
		  \cline{3-6}			
		   measurements & functions  & 100    & 75     & 50       & 25	\\	
		  \hline	
		   3 & $g_1$, $g_2$, $g_3$ & 1.45 $\cdot 10^1$  & 3.77  $\cdot 10^2$  & 3.59  $\cdot 10^{3}$   & 8.81  $\cdot 10^{4}$   \\
		   \hline
		   2 & $g_1$, $g_2$ & 1.42 $\cdot 10^1$  & 3.70  $\cdot 10^2$  & 3.41  $\cdot 10^{3}$   & 8.09  $\cdot 10^{4}$   \\
		   2 & $g_2$, $g_3$ & 2.55 $\cdot 10^1$  & 9.55  $\cdot 10^2$  & 9.52  $\cdot 10^{3}$   & 2.14  $\cdot 10^{5}$   \\
		   2 & $g_1$, $g_3$ & 2.63 $\cdot 10^1$  & 3.76  $\cdot 10^2$  & 3.53  $\cdot 10^{3}$   & 8.44  $\cdot 10^{4}$   \\
		   \hline
		   1 & $g_1$        & 4.63 $\cdot 10^{3}$  & 6.43  $\cdot 10^{3}$  & 1.81  $\cdot 10^{5}$   & 4.52  $\cdot 10^{6}$\\
		   1 & $g_2$        & 2.15 $\cdot 10^{4}$  & 3.37  $\cdot 10^{5}$  & 5.31  $\cdot 10^{5}$   & 5.29  $\cdot 10^{5}$\\
		   1 & $g_3$        & 3.99 $\cdot 10^{3}$  & 5.98  $\cdot 10^{4}$  & 8.76  $\cdot 10^{4}$   & 7.20  $\cdot 10^{6}$\\
		  \hline 
		\end{tabular}
}		
		\caption{Condition numbers of the matrix $T$. Different combinations of boundary functions.}
		\label{tab-cond} 
	\end{figure}

In Figure~\ref{fig_singval_all}, the singular values for the different limited angle cases are plotted in descending order. One can see a decrease of the smallest singular values with the available angle, and as expected the problem becomes more ill-posed with less data.

Note that the last singular values seem to decay more rapidly. We believe that this is an effect of the numerical discretization and does not resemble the continuous problem. In light of this observation one could have truncated the singular values before computing the condition number, however the overall conclusion from Table \ref{tab-cond} would remain the same.

	\begin{figure}[H] \centering
		\includegraphics[width=0.5\textwidth]{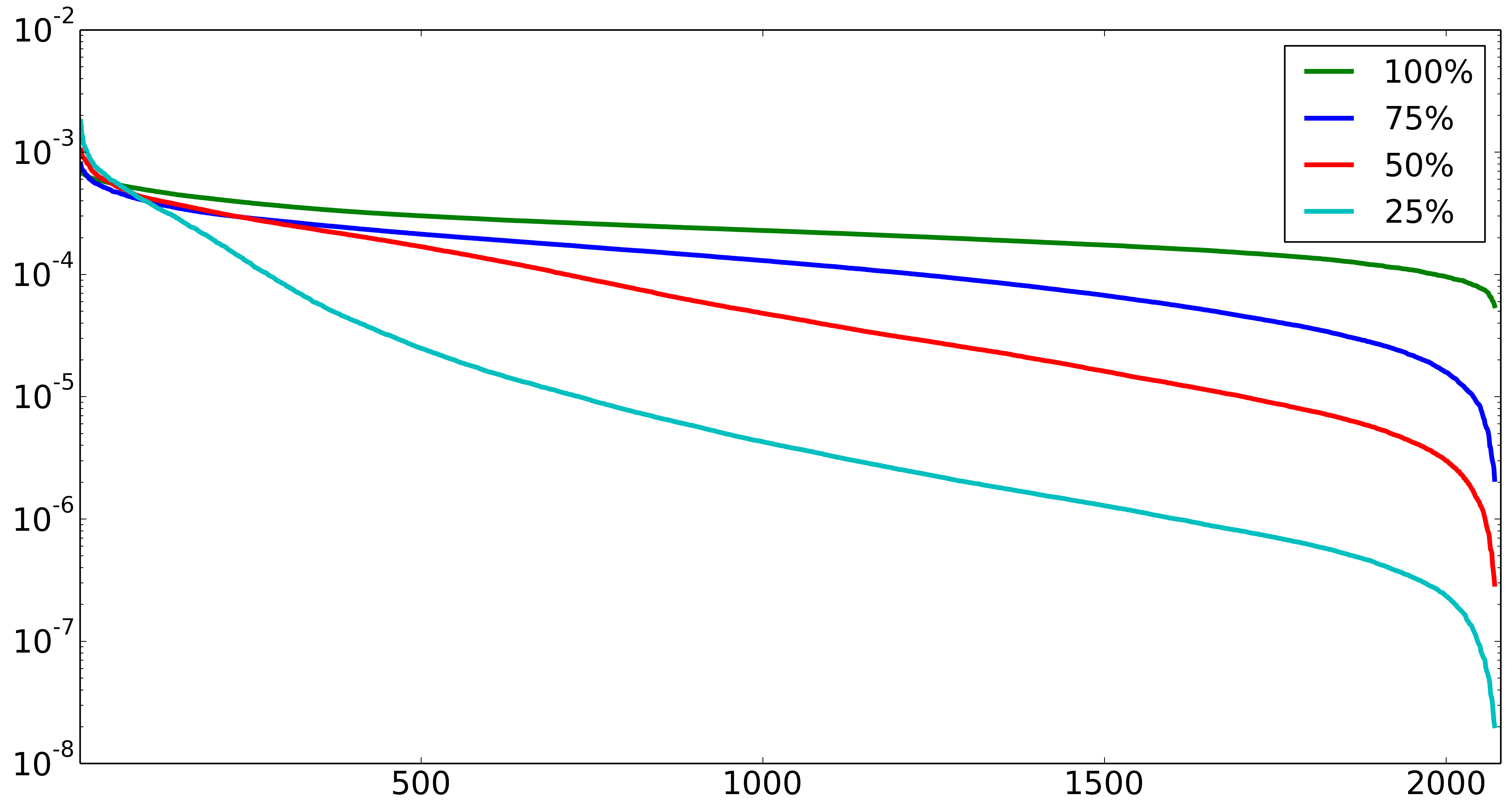}
		\caption{Singular values for $100 \%$, $75\%$, $50\%$ and $25\%$ available boundary with three measurements.}
		\label{fig_singval_all}
	\end{figure}
Moreover, in Figure~\ref{fig_singval_meas} we observe a similar decrease of the singular values depending on the number of measurements, thus confirming our conclusions about condition numbers.
 	\begin{figure}[H] \centering
		\includegraphics[width=0.5\textwidth]{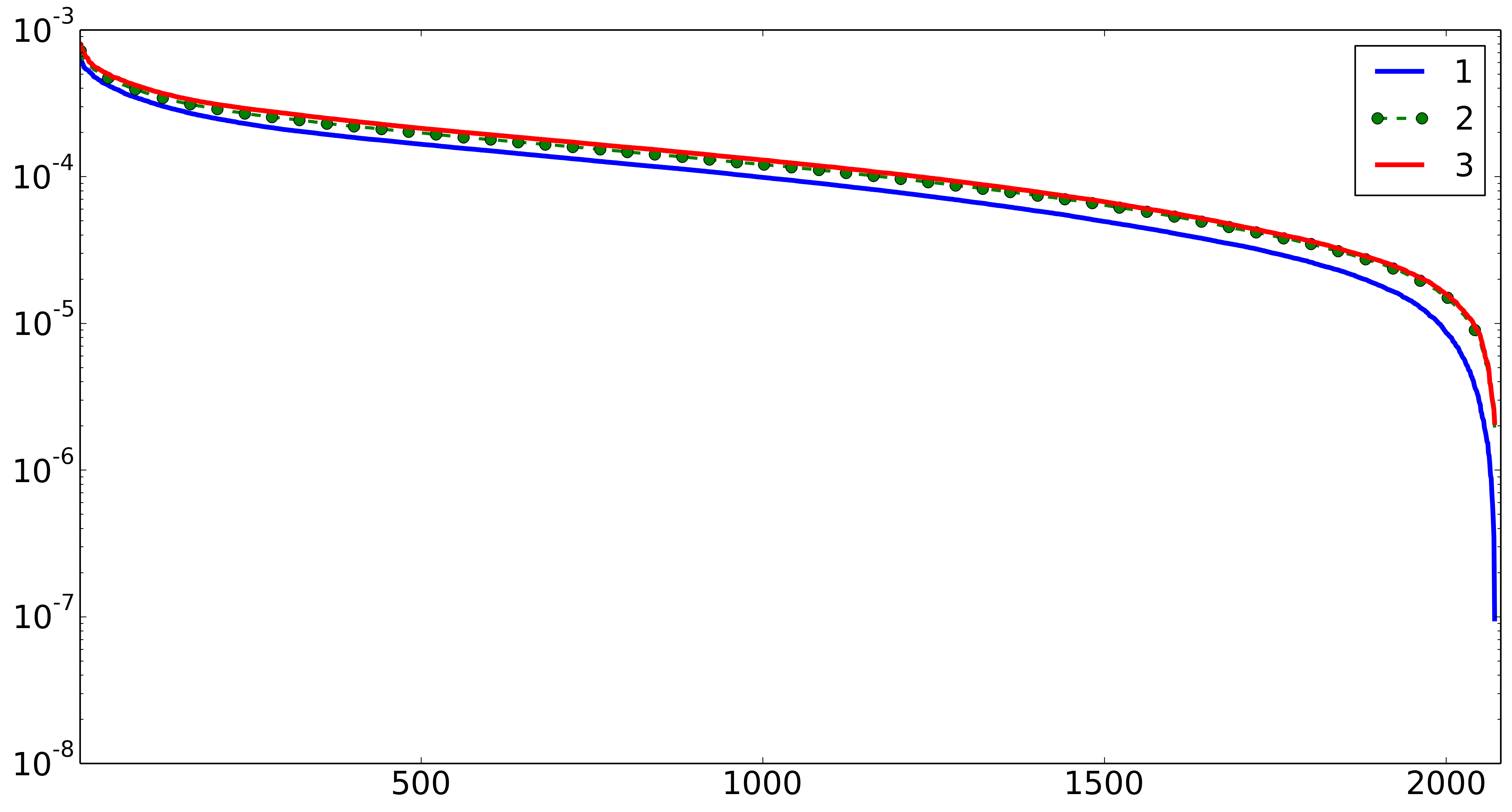}
		\caption{Singular values for $75 \%$ available boundary depending on the number of measurements.}
		\label{fig_singval_meas}
	\end{figure}

A selection of the resulting singular vectors for the Examples~\ref{ex_75}, \ref{ex_50} and \ref{ex_25} is depicted in Figures~\ref{fig_singvec_75}, \ref{fig_singvec_50} and \ref{fig_singvec_25}, respectively. The ordering of the singular values and singular vectors, denoted by $v_i$, is done in the usual way, i.e., the singular values are arranged in descending order, from the largest to the smallest, and the singular vector $v_1$ belongs to the largest singular value.
	
We see that different singular vectors carry information about the true conductivity $\sD$ in different areas of the domain. Unfortunately for the reconstruction, the singular vectors containing information about the area close to the inaccessible boundary correspond to small singular values. Since regularization methods have to rely on the singular vectors corresponding to larger singular values for a stable reconstruction, this adds to the explanation of the fact that close to the inaccessible boundary, the conductivity $\sD$ cannot be reconstructed. 
	
	\begin{figure}[H] \centering
		\includegraphics[width=0.32\textwidth, trim={3cm 19cm 11.3cm 3cm}, clip=true]{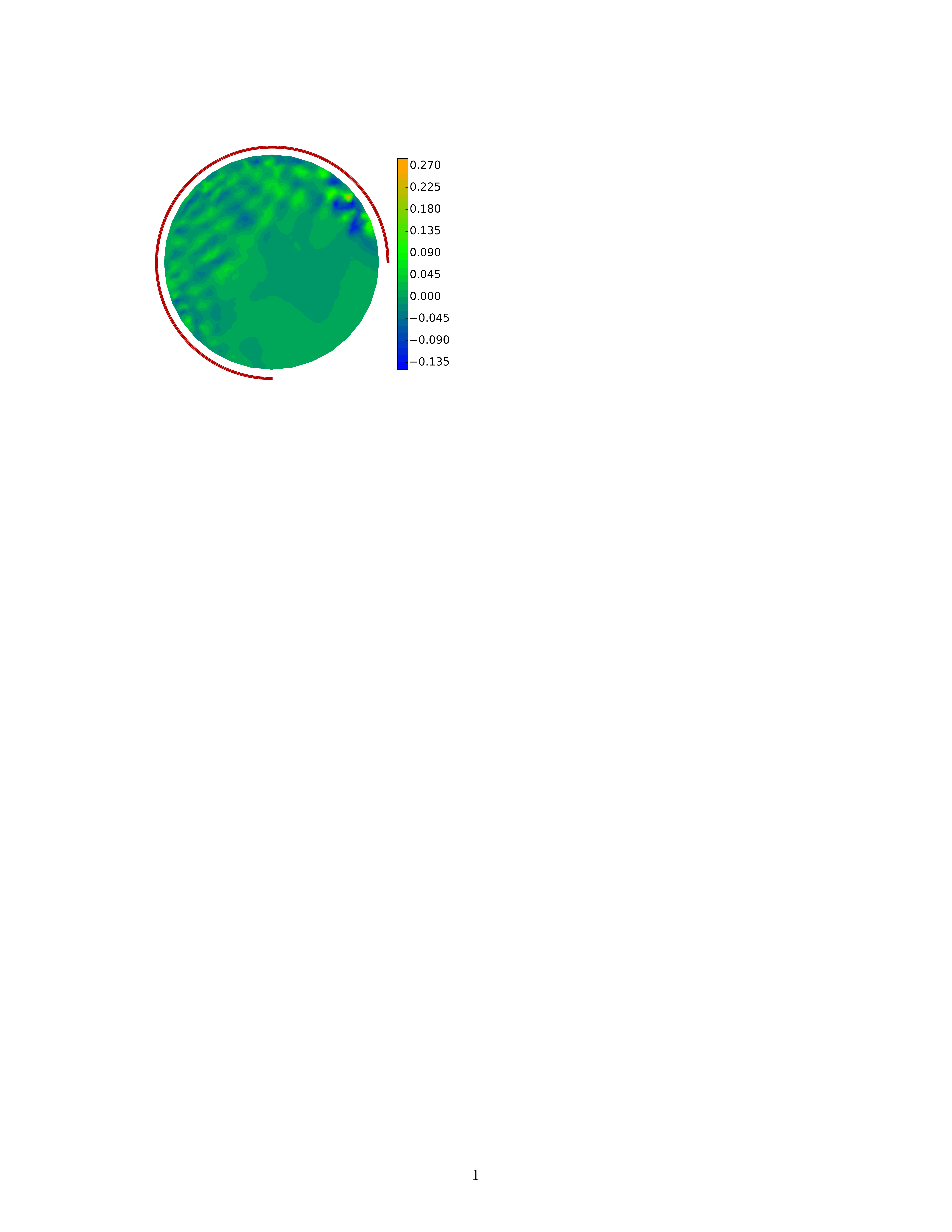}
		\includegraphics[width=0.32\textwidth, trim={3cm 19cm 11.3cm 3cm}, clip=true]{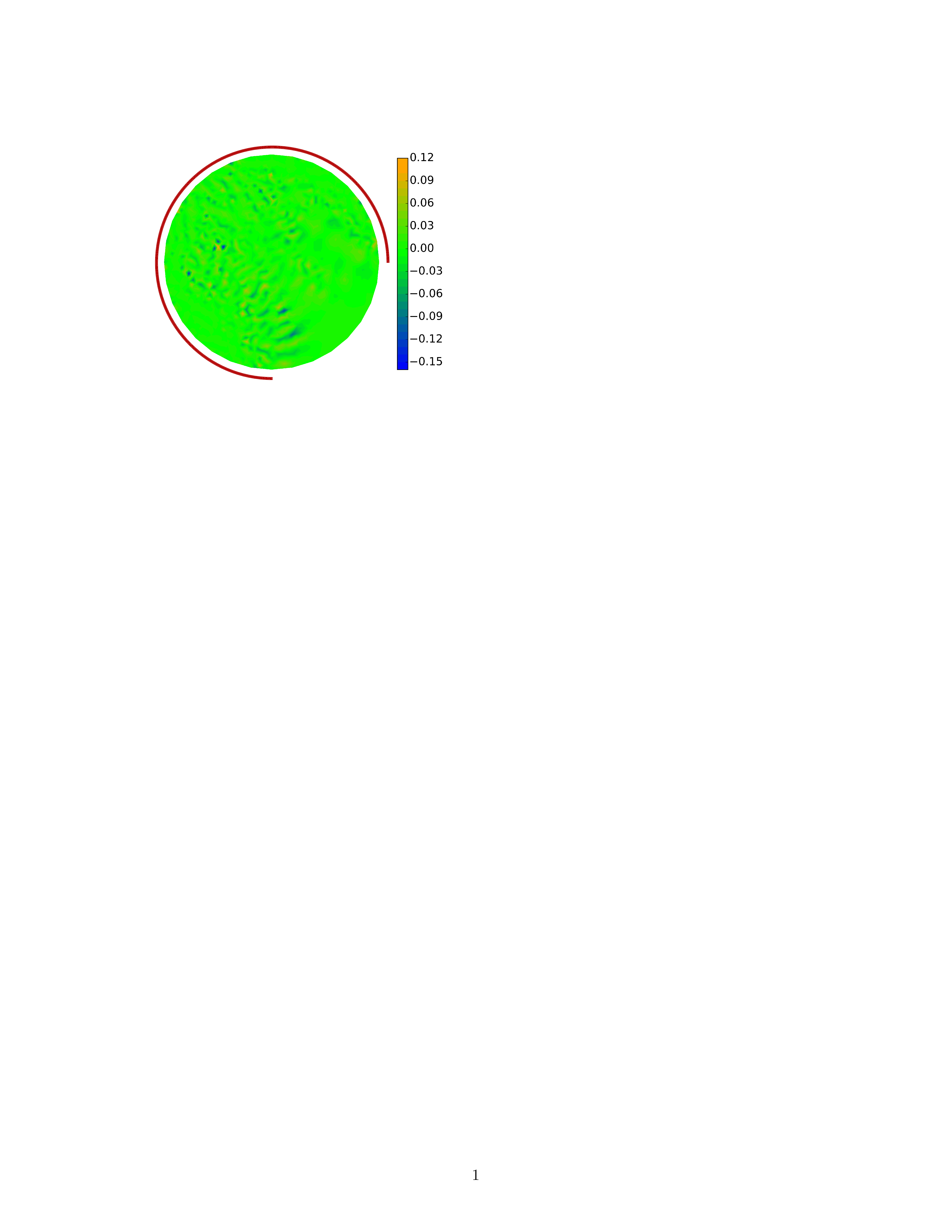}
		\includegraphics[width=0.32\textwidth, trim={3cm 19cm 11.3cm 3cm}, clip=true]{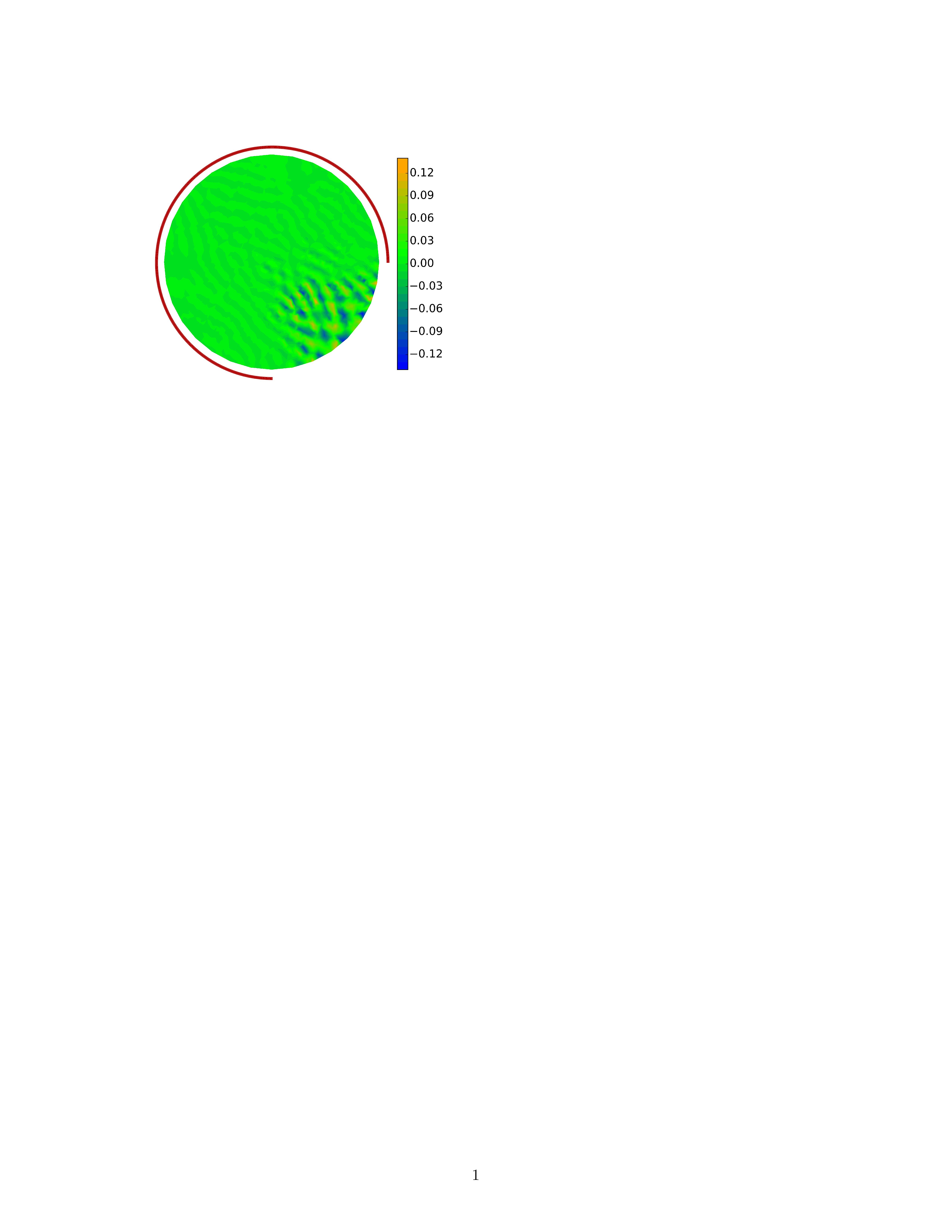}
		\caption{
		Singular vectors of the matrix $T$ from boundary conditions $g_j$, $j=1,2,3$ defined in \eqref{bdc_theta} with $\alpha=2\pi/3$. From left to right: $v_{100}$, $v_{1000}$, $v_{2060}$.}
		\label{fig_singvec_75}
	\end{figure}

	\begin{figure}[H] \centering
		\includegraphics[width=0.32\textwidth, trim={3cm 19cm 11.3cm 3cm}, clip=true]{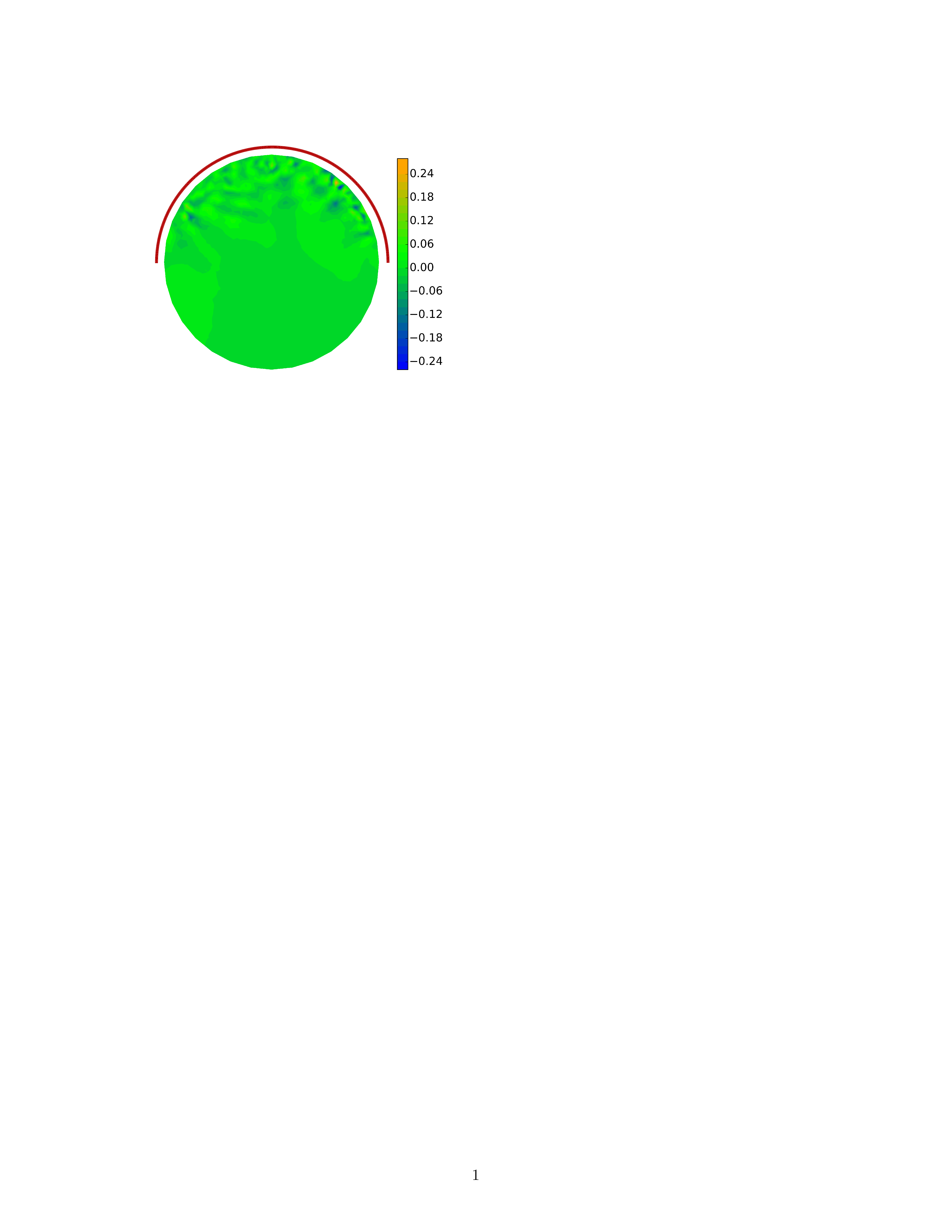}
		\includegraphics[width=0.32\textwidth, trim={3cm 19cm 11.3cm 3cm}, clip=true]{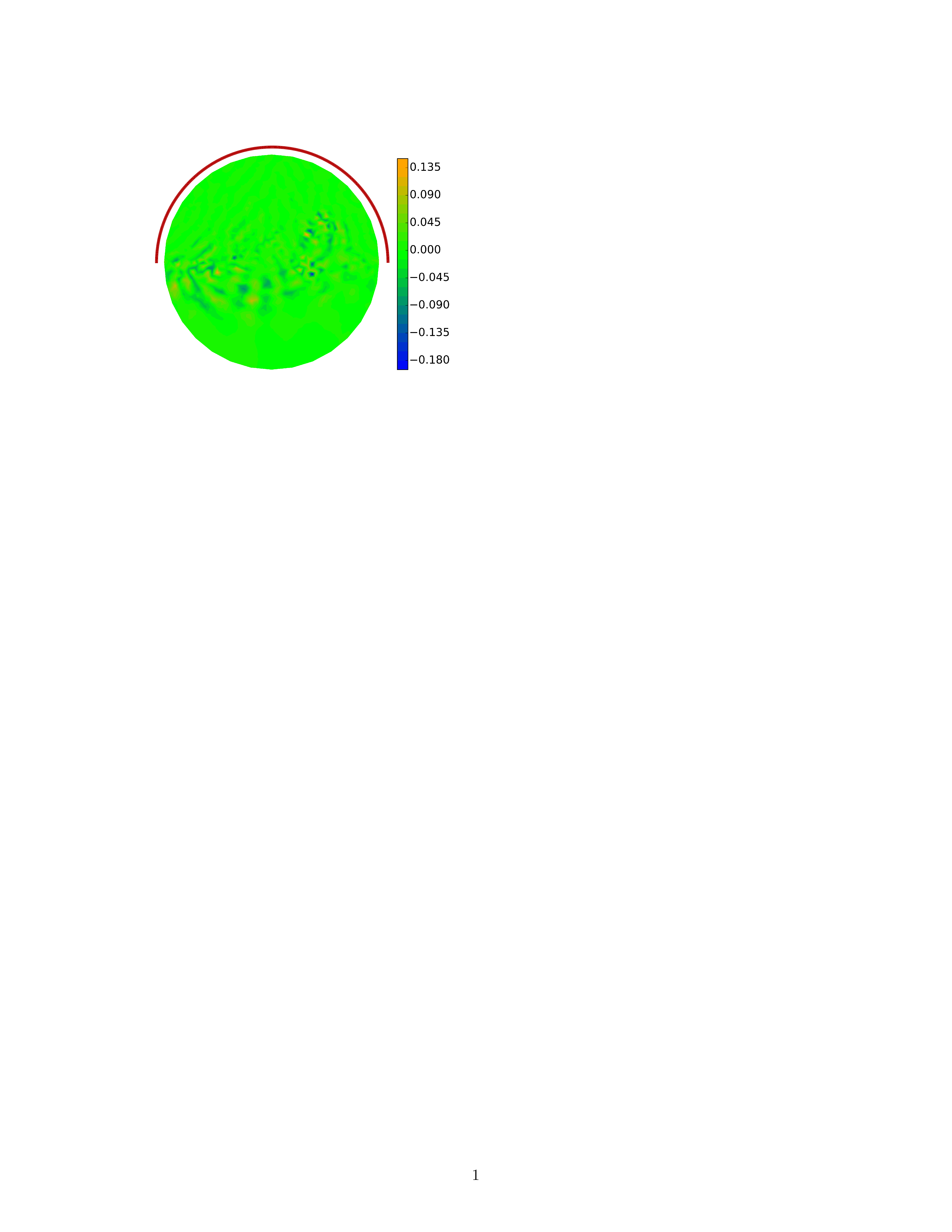}
		\includegraphics[width=0.32\textwidth, trim={3cm 19cm 11.3cm 3cm}, clip=true]{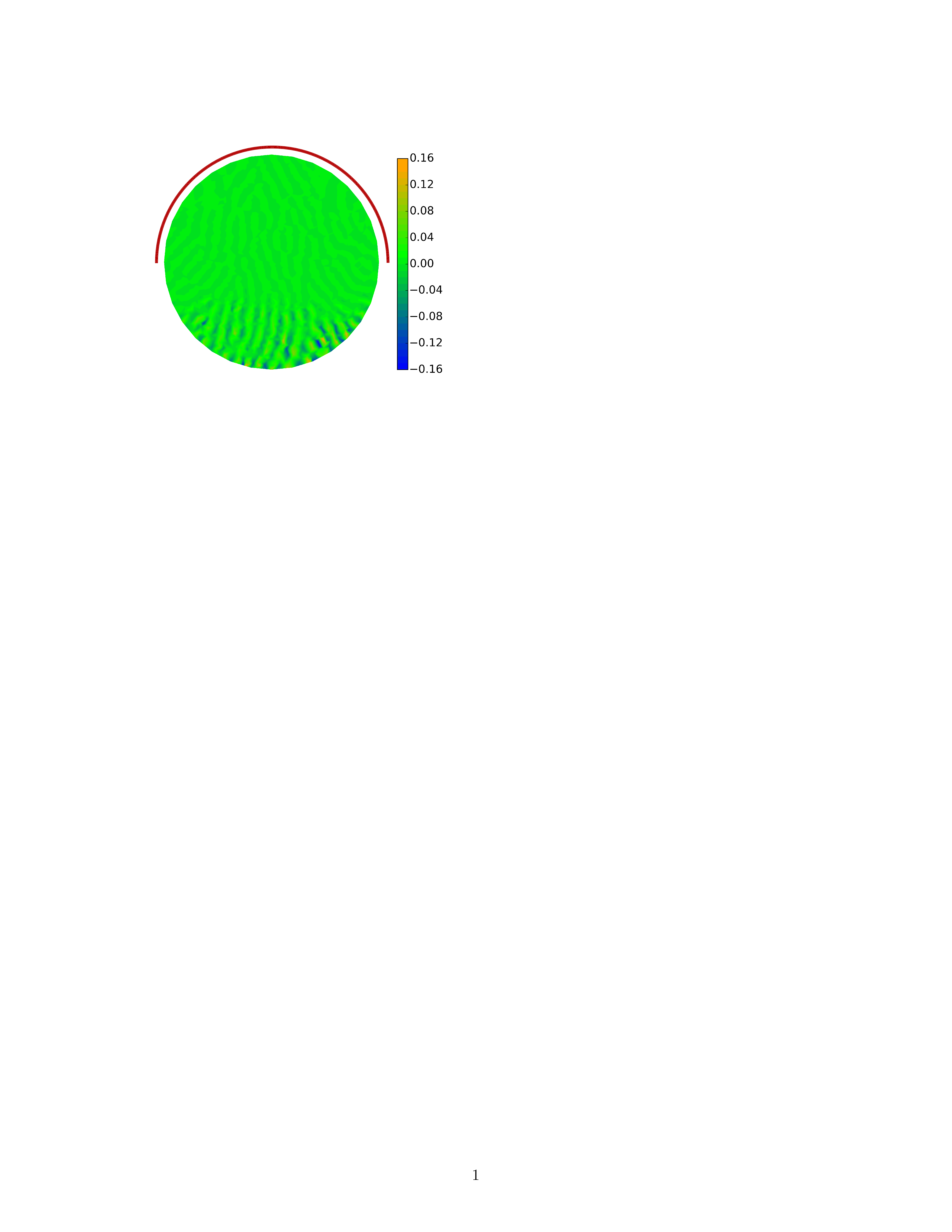}
		\caption{
		Singular vectors of the matrix $T$ from boundary conditions $g_j$, $j=1,2,3$ defined in \eqref{bdc_theta} with $\alpha=\pi$. From left to right: $v_{100}$, $v_{1000}$, $v_{2060}$.}
		\label{fig_singvec_50}
	\end{figure}	

	\begin{figure}[H] \centering
		\includegraphics[width=0.32\textwidth, trim={3cm 19cm 11.3cm 3cm}, clip=true]{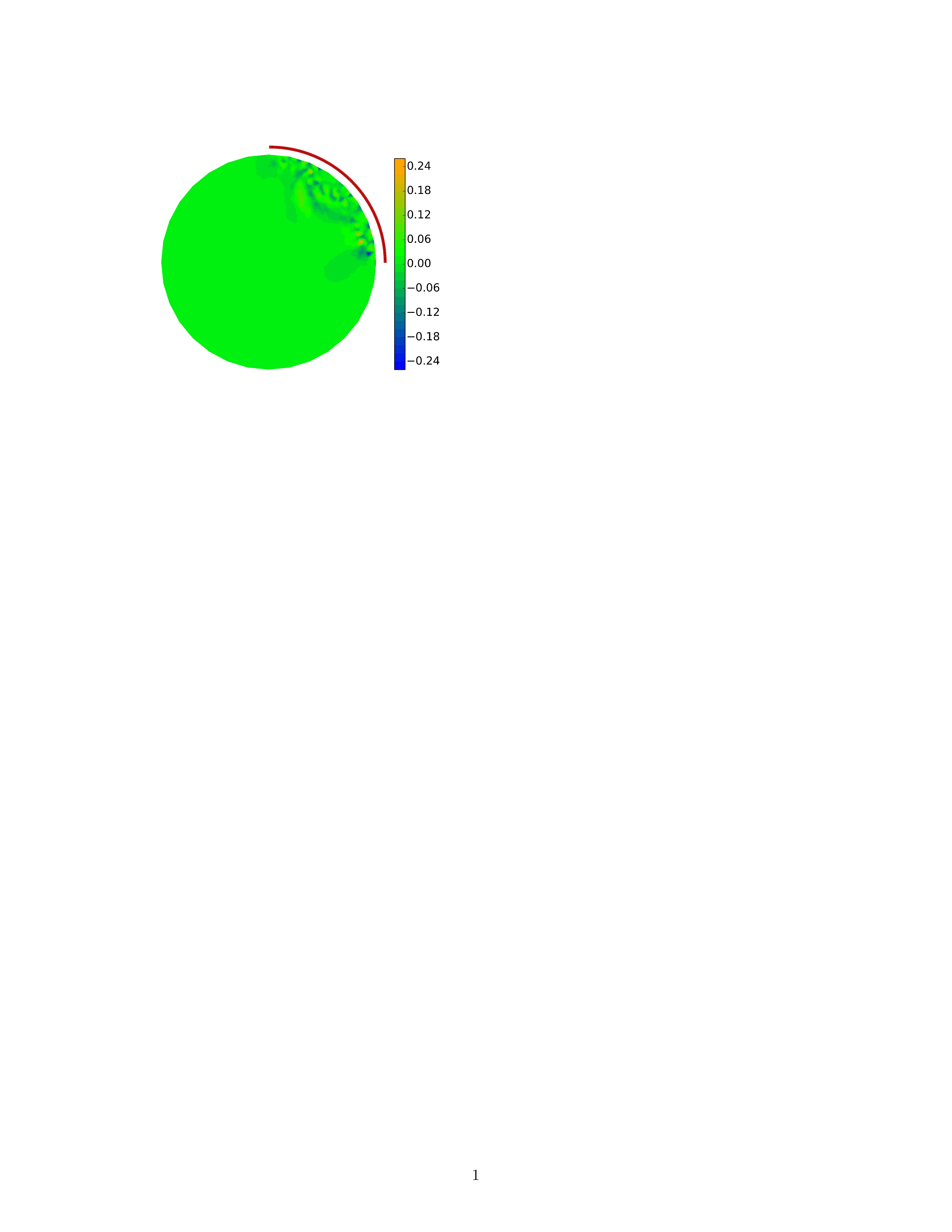}
		\includegraphics[width=0.32\textwidth, trim={3cm 19cm 11.3cm 3cm}, clip=true]{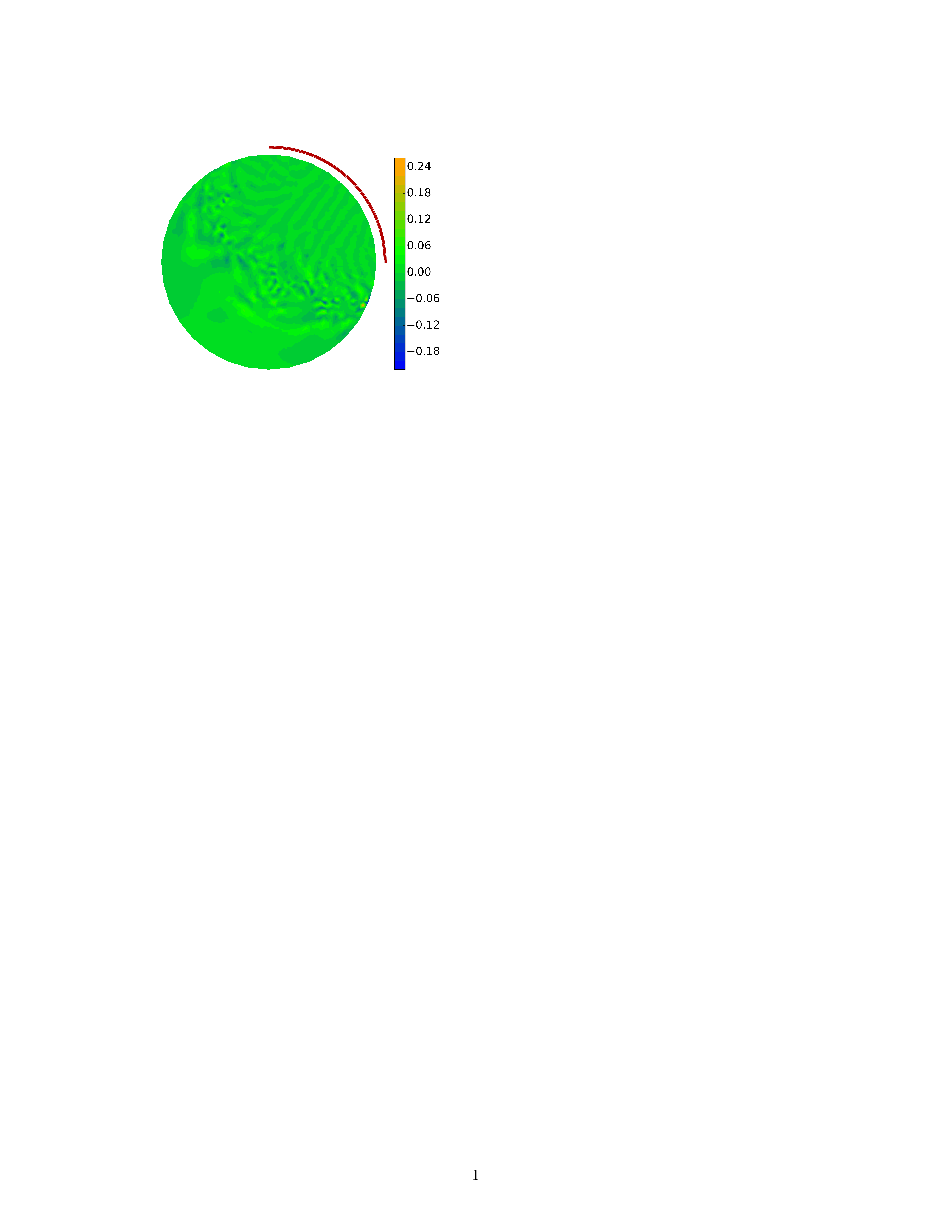}
		\includegraphics[width=0.32\textwidth, trim={3cm 19cm 11.3cm 3cm}, clip=true]{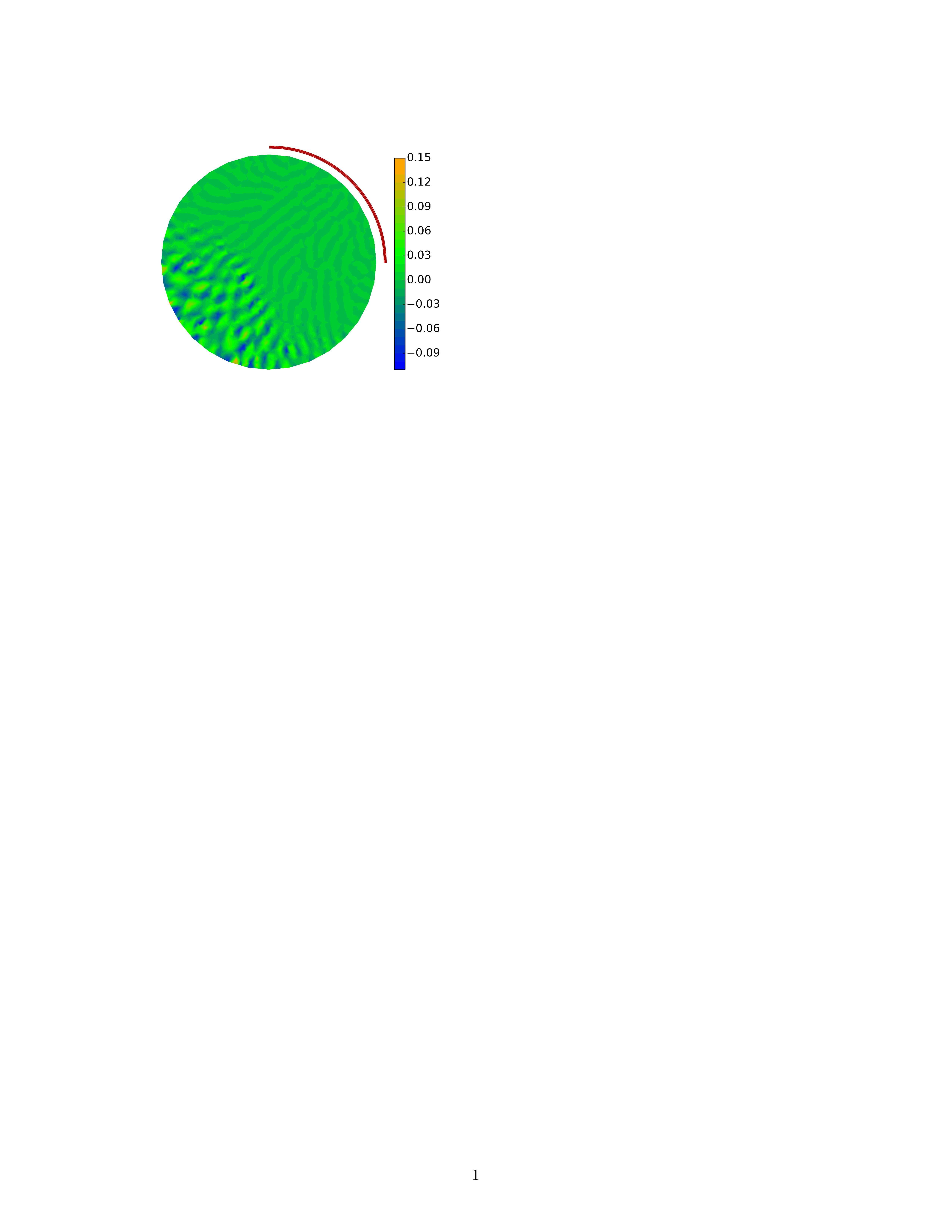}
		\caption{
		Singular vectors of the matrix $T$ from boundary conditions $g_j$, $j=1,2,3$ defined in \eqref{bdc_theta} with $\alpha=\pi/2$. From left to right: $v_{100}$, $v_{1000}$, $v_{2060}$.}
		\label{fig_singvec_25}
	\end{figure}

\section{Conclusions}
We formulated the hybrid imaging problem of estimating a spatially varying conductivity $\sigma$ from measurements of power densities resulting from different prescribed boundary currents in an infinite dimensional setting and  presented various numerical results, focusing especially on the limited angle case. In particular, we saw that reconstructing the conductivity is difficult far away from the accessible part of the boundary, due to lack of information in this area. Through a numerical ill-posedness quantification, we were able to establish a close connection between the reconstruction quality and the SVD of the Fr\'echet derivative of $F$.  As the size of the accessible boundary becomes smaller, the reconstruction quality deteriorates, which is confirmed by a rapid decay of the corresponding singular values. The degree of ill-posedness of the linearized problem decreases with the size of the accesible boundary and if more than one measurement is used. The obtained results shed some light on the influence of limited angle data in hybrid tomography, clearly illustrating the possibilities and limitations in numerical practise. Other measures of ill-posedness quantification than condition numbers can be suggested, such as the decay rate of the singular values.

\section{Support and Acknowledgements}
ES was funded by the Danish Council for Independent Research $|$ Natural Sciences: grant 4002-00123. SH was funded by the Austrian Science Fund (FWF): W1214-N15, project DK8. The authors would like to acknowledge the M.Sc.\ work of Christina Hilderbrandt, which includes early ideas on ill-posedness quantification for the limited angle problem with Dirichlet boundary conditions.


\bibliographystyle{plain}
\bibliography{mybib}

\end{document}